\documentclass[times,sort&compress,3p]{elsarticle}
\journal{Journal of Multivariate Analysis}
\usepackage[labelfont=bf]{caption}

\newcommand{\bT}{\mathbb{T}}
\newcommand{\bA}{\mathbb{A}}
\newcommand{\bP}{\mathbb{P}}
\newcommand{\bE}{\mathbb{E}}
\newcommand{\bG}{\mathbb{G}}
\newcommand{\bF}{{\mathbb{F}}}
\newcommand{\bH}{{\mathbb{H}}}
\newcommand{\bI}{{\mathbb{I}}}

\newcommand{\Sg}{{\rm{Sign}}}
\newcommand{\tr}{{\rm{Trace}}}

\newcommand{\bsX}{{\boldsymbol{X}}}

\usepackage{amsmath,amsfonts,amssymb,booktabs,color,epsfig,graphicx,hyperref,url}

\usepackage{amsthm,graphicx,natbib,ulem,hyperref}
\usepackage{enumitem}
\usepackage{multirow}

\theoremstyle{plain}
\newtheorem{theorem}{Theorem}

\newtheorem{lemma}{Lemma}

\theoremstyle{definition}

\newtheorem{remark}{Remark}
\newtheorem{example}{Example}

\def\red{\textcolor{red}}

\def\ban#1\ean{\begin{align}#1\end{align}}
\def\bans#1\eans{\begin{align*}#1\end{align*}}

\newcommand{\mbf}[1]{\mbox{\boldmath $#1$}}
\newcommand{\ba}{{\mbf \beta}}

{\catcode `\@=11 \global\let\AddToReset=\@addtoreset}
\AddToReset{equation}{section}

\AddToReset{Theorem}{section}

\newtheorem{cor}{Corollary}[section]

\def\ba{\begin{array}}
\def\bc{\begin{center}}
\def\bd{\begin{description}}
\def\be{\begin{enumerate}}
\def\ea{\end{array}}
\def\ec{\end{center}}
\def\ed{\end{description}}
\def\edt{\end{document}}
\def\ee{\end{enumerate}}
\def\ben{\begin{equation}}
\def\benn{\begin{equation*}}
\def\een{\end{equation}}
\def\eenn{\end{equation*}}
\def\benr{\begin{eqnarray}}
\def\eenr{\end{eqnarray}}
\def\benrr{\begin{eqnarray*}}
\def\eenrr{\end{eqnarray*}}

\def\r{\ref}

\def\vep{\varepsilon}

\def\bI{{\mbf I}}

\def\bX{{\mbf X}}

\def\bZ{{\mbf Z}}

\def\bal{\begin{align*} }
\def\eal{\end{align*} }
\def\baln{\begin{align} }
\def\ealn{\end{align}}

\begin{document}

\def\bvep{{\mbf \vep}}
\def\red{\textcolor{red}}
\def\blu{\textcolor{blue}}
\def\grn{\textcolor{green}}
\def\ve{{\rm vec}}
\def\1{{\mbf 1}}
\def\bfe{{\mbf e}}

\begin{frontmatter}

\title{Limiting Spectral Distribution of moderately large Kendall's correlation matrix and its application\\}

\author[1] {Raunak Shevade}\corref{mycorrespondingauthor}
\author[1]{Monika Bhattacharjee}

\address[1]{Department of Mathematics, Indian Institute of Technology Bombay}

\cortext[mycorrespondingauthor]{Corresponding author: Raunak Shevade, Email address: \url{shevade.raunak@gmail.com}}

\begin{abstract}
We establish the limiting spectral distribution of Kendall’s correlation matrices in the moderate high-dimensional regime where the dimension grows slower than the sample size. Our framework allows observations to be independent but not necessarily identically distributed, and accommodates both discrete and continuous data. Unlike existing results developed under i.i.d.\ observations, our approach remains valid under substantial distributional heterogeneity and also covers certain i.i.d.\ models beyond previously studied settings.

Under mild symmetry and convergence conditions on some traces, we prove that the empirical spectral distribution of a properly centered and scaled Kendall’s correlation matrix converges weakly almost surely to a deterministic, generally model-dependent limit. The analysis clarifies how distributional heterogeneity influences the limiting spectrum.

As an application, we propose a graphical tool for detecting dependence among components in high-dimensional data and show that ignoring heterogeneity may lead to spurious detection of dependence.
\end{abstract}

\begin{keyword} Limiting spectral distribution, semi-circle law, moderate high-dimension, Hoeffding's decompositions, free cumulants, test of independence

\MSC[2020] Primary: 60B20; Secondary: 46L54, 62G35, 62H15, 62H20
\end{keyword}

\end{frontmatter}

\section{Introduction} \label{sec: sintro}

Sample covariance and correlation matrices play a central role in multivariate statistical analysis. In high-dimensional settings, a common strategy for understanding the interdependence among variables involves studying the asymptotic behavior of the eigenvalues and eigenvectors of these matrices. This area has been extensively explored within the framework of random matrix theory.

Consider a random matrix $\mathbb{R}_p$ of dimension $p \times p$. The empirical spectral distribution (ESD) of $\mathbb{R}_p$ assigns a mass of $1/p$ to each of its eigenvalues and is denoted by $\nu^{\mathbb{R}_p}$. Its corresponding cumulative distribution function is $F^{\mathbb{R}_p}$, also referred to as the empirical cumulative distribution function (ECDF). If $F^{\mathbb{R}_p}$ converges weakly (in probability or almost surely) to a limit as $p \to \infty$, the limiting distribution is called the \textit{limiting spectral distribution} (LSD). For a function \( f \), the associated \textit{linear spectral statistic} (LSS) is defined as $\int f(x)\, dF^{\mathbb{R}_p}(x)$. See \citet{bai2010spectral} for a comprehensive account of LSDs and LSSs of sample covariance matrices, and \citet{jiang2004limiting, gao2017high} for similar studies involving sample correlation matrices.

To ensure almost sure convergence to an LSD, it is common to assume that the second moments of the entries are finite. However, when the data have heavy tails, many of the established results for light-tailed distributions break down. For instance, \citet{heiny2022limiting} demonstrated that the spectral behavior of sample correlation matrices in heavy-tailed scenarios deviates significantly from that of the light-tailed case.

In such heavy-tailed contexts, robust, non-parametric alternatives to sample covariance matrices, such as those based on ranks, are often preferred. In particular, Kendall’s correlation and Spearman’s $\rho$ matrices have garnered significant attention due to their favorable asymptotic properties even without strong moment assumptions. For example, \citet{ECP} and \citet{bai2008large} established that, under suitable conditions (e.g., $p, n \to \infty$ with $p/n \to \theta \in (0, \infty)$), the LSDs of Kendall’s correlation and Spearman’s $\rho$ matrices follow the Mar\v{c}enko–Pastur law or its affine transformation. Asymptotic results for LSSs and extremal eigenvalues of these matrices have also been studied in works such as \citet{bao2015spectral, bao2019tracya, bao2019tracyb, li2021central}. However, these results typically rely on the assumption that the data are identically distributed, absolutely continuous and $p$ grows at the same rate with sample size.
Most recently, \citet{dornemann2025ties} investigated the LSD of properly centered and normalized Kendall’s correlation and Spearman’s $\rho$ matrices under i.i.d.\ observations that may be discrete, allowing for the general regime $p/n \to \theta \in [0,\infty)$. In a different asymptotic regime, \citet{bousseyroux2026another} studied the LSD of Kendall’s correlation matrix when $p/n^2$ converges to a positive constant. \citet{kendep2023} studies the LSD of Kendall’s correlation matrix under dependence among components within i.i.d.\ continuous observations in the proportional high-dimensional regime. 


This work investigates the relatively less-explored setting where the observations may be non-identically distributed continuous and/or discrete. Our focus is on the limiting spectral behavior of Kendall’s correlation matrix in a \textit{moderate high-dimensional} regime, where the dimension $p$ grows slower than the sample size $n$, that is, $p/n \to 0$.

The moderate high-dimensional regime is well-established and frequently encountered in the literature on high-dimensional statistics (see, e.g., \citet{WP2014}, \citet{WAP2015}, \citet{bhattacharjee2019joint}). Results derived for the proportional growth regime $p/n \to \theta \in (0,\infty)$ generally degenerate to non-informative limits when $\theta$ is naively set to zero. Consequently, such results cannot be directly applied for statistical inference in the $\theta = 0$ case. Obtaining non-degenerate, informative limits requires different centering and scaling, and the analysis becomes fundamentally different from the proportional growth setting. Indeed, $\theta = 0$ and $\theta \in (0,\infty)$ typically yield qualitatively distinct results; see, for example, \citet{bhattacharjee2016large} and \citet{bhattacharjee2019joint}, or \citet{LAP2013} and \citet{WAP2015}, where the LSDs are derived separately for each regime. In this paper, we establish the LSD of Kendall’s correlation matrix in the case $p/n \to 0$.

Kendall’s correlation matrix is constructed from pairwise comparisons using a sign-based kernel. The sign function is defined as \(
\mathrm{Sign}(x)=\frac{x}{|x|}\bI(x \neq 0)
\),  where $\bI(\cdot)$ is the indicator function. 
For vectors in $\mathbb{R}^2$, define the kernel
\(
h((x_1, y_1), (x_2, y_2)) = \text{Sign}(x_1 - x_2) \, \text{Sign}(y_1 - y_2).
\)
Given $n$ paired observations $\{(x_i, y_i)\}_{i=1}^n$, Kendall’s $\tau$ is the U-statistic with kernel $h$:
\[
\binom{n}{2}^{-1} \sum_{1 \leq i < j \leq n} h((x_i, y_i), (x_j, y_j)).
\]
Let $X = (X_{ki})$ be a $p \times n$ data matrix. For each pair $(k, l)$, define $T_{kl}$ as the Kendall’s $\tau$ between the sequences $\{X_{ki}\}_{i \in [n]}$ and $\{X_{li}\}_{i \in [n]}$ where $[n] = \{1,2,\ldots, n\}$. The Kendall’s correlation matrix is then $\bT = (T_{kl})_{1 \leq k,l \leq p}$. 

The first result on the LSD of Kendall’s correlation matrix for independent observations appears in \citet{ECP}, where the columns of $X$ are assumed to be identically distributed with a density. They show that the LSD of $\bT$ converges in probability to the distribution of $\frac{2}{3} M_\theta + \frac{1}{3}$, where $M_\theta$ follows the Mar\v{c}enko–Pastur law with parameter $\theta$. 
More recently, \citet{dornemann2025ties} established that, under i.i.d.\ observations that may be discrete, the LSD of a properly centered and normalized Kendall’s correlation matrix is an affine transformation of the Mar\v{c}enko–Pastur law in the proportional high-dimensional regime, and converges to the semi-circle law in the moderate high-dimensional regime. 
In a different asymptotic regime, where the dimension is proportional to the square of the sample size, \citet{bousseyroux2026another} showed that, for i.i.d.\ continuous observations, the Hoeffding decomposition becomes negligible, and instead a component of the remainder term governs the limiting behavior.

Let ${\rm D}(\bA)$ denote the diagonal matrix formed from the diagonal entries of $\bA$. For $\bT$, the $k$-th diagonal entry is
\(
(n(n-1))^{-1} \sum_{i \neq j} \bI\{X_{ki} \neq X_{kj}\}.
\)
When the data is absolutely continuous, this value is almost surely 1, so ${\rm D}(\bT) = \bI_p$, the identity matrix of order $p$ and $\bT - {\rm D}(\bT)$ differs from $\bT$ only by a constant shift. In contrast, when absolute continuity does not hold, the diagonal entries may vary substantially and are no longer uniformly 1. This variability complicates direct analysis of $\bT$. Since the diagonals correspond to self-association—which is often less informative—we instead focus on the centered matrix $\bT - {\rm D}(\bT)$. This approach accommodates both discrete and heterogeneous data.

Our main result establishes the almost sure weak convergence of the ESD of $\bT - {\rm D}(\bT)$ under three assumptions. The first, as in \citet{ECP}, is the independence of the entries of $X$. The second requires a symmetry condition on the distribution of $\mathrm{Sign}(X_{ki}-X_{kj})$ and its reverse for all $i\neq j$; while this holds automatically in the i.i.d.\ setting, it also accommodates a much broader class of non-identically distributed observations. The third assumption imposes conditions on the traces of powers of certain variance--covariance matrices $\mathbb{G}_{k,i}$, defined via conditional expectations of sign functions. Specifically, we require that the averaged quantities $n^{-1}\mathrm{Tr}(\mathbb{G}_{k,i})$ and $n^{-2}\mathrm{Tr}(\mathbb{G}_{k_1,i}\mathbb{G}_{k_2,i})$ over $i\in[n]$ converge to constants at an appropriate rate. Notably, these conditions are strictly weaker than those in \citet{ECP} and, crucially, allow for heterogeneous (non-identically distributed) observations.

Under these assumptions, Theorem~\ref{thm: A} shows that the LSD of a suitably centered and scaled version of $\bT - {\rm D}(\bT)$ exists almost surely and is characterized by a symmetric probability distribution. In general, this limit is not the semi-circle law (see Examples~\ref{Example: A} and \ref{Example: B}). In the special case of independent and identically distributed observations, the LSD admits a characterization via the free multiplicative convolution of a semi-circle law and the LSD of an associated variance--covariance matrix of bounded transformations of the data (whenever the latter exists). Furthermore, Theorem~\ref{thm: 1} identifies a class of non-identically distributed data matrices with controlled component-wise heterogeneity for which the LSD reduces to the semi-circle law. Section~\ref{sec: example} presents three illustrative examples.  In Section \ref{sec: adhoc}, we briefly discuss the idea of a data-driven ad-hoc method for verifying our crucial assumptions. A rigorous mathematical justification of this approach requires more advanced tools and techniques and is left for future work.

It is important to emphasize that \citet{dornemann2025ties}, the only existing work in a comparable framework, focuses on a normalized Kendall’s correlation matrix. In contrast, we analyze a centered and scaled version, and this difference is not merely technical: it leads to substantially different asymptotic behavior  and allows us to accommodate some models even in i.i.d. framework outside the scope of \citet{dornemann2025ties}. 
The two approaches coincide only in the classical setting of continuous i.i.d.\ observations, where both sets of assumptions are automatically satisfied. In more general settings, particularly in the presence of discrete or zero-inflated data, the asymptotic behaviors may diverge even under i.i.d.\ sampling. The normalization in \citet{dornemann2025ties} excludes the presence of asymptotically degenerate components, an assumption that may be unrealistic in applications involving sparse or zero-inflated data.  In contrast, our framework accommodates such degeneracies (see Example~\ref{Example: B}), where the normalization-based approach fails but our assumptions remain valid and the LSD exists.

On the other hand, the absence of normalization in our setting introduces heterogeneity in the scale of different components, necessitating explicit control through our trace conditions. This stands in contrast to \citet{dornemann2025ties}, where normalization effectively homogenizes the variability across components. Consequently, their results (e.g., Theorem~2.5(1)) and our Theorems~\ref{thm: A} and \ref{thm: 1} are theoretically distinct and not directly comparable. Nevertheless, we note that, with suitable modifications, our proof techniques can be adapted to recover their main result (Remark \ref{rem: heinypf}). 

Most importantly, we emphasize that our work provides a first systematic step toward developing a theory for Kendall’s correlation matrices under non-identically distributed observations. Notably, all examples presented in Section~\ref{sec: 2}, involving non-identically distributed data, satisfy our assumptions while remaining outside the scope of the framework developed in \citet{dornemann2025ties}, thereby illustrating the broader applicability of our results.

Handling non-identically distributed observations is therefore a central contribution, but not the sole source of novelty. Even within certain i.i.d.\ models, there exist data-generating mechanisms that fall outside the assumptions of \citet{dornemann2025ties} yet are covered by our framework (see Example~2). The present manuscript thus offers not only a heterogeneous extension, but also a conceptually distinct analytical approach that captures regimes not addressed by existing results.


We further investigate the implications of our findings for testing independence in high-dimensional settings.  
Classical approaches—such as likelihood ratio tests (\citet{jiang2013testing}), $L^2$-norm based methods (\citet{gao2017high, yang2015independence}), and $L^\infty$-norm based methods (\citet{jiang2004asymptotic, zhou2007asymptotic, cai2011limiting})—often require finite high-order moments and are therefore unsuitable for heavy-tailed data.  

In contrast, rank-based methods, including those based on Kendall’s correlation and Spearman’s $\rho$, are more robust under heavy-tailed distributions. Significant contributions in this direction include $L^2$-norm tests using the Spearman’s matrix (\citet{bao2015spectral}), test based on LSS of  Kendall's correlation matrix (\citet{li2021central}), $L^\infty$-norm tests for both Kendall’s and Spearman’s matrices (\citet{han2017distribution}), and general $U$-statistic based tests (\citet{leung2018testing}) under the assumption of independent, absolutely continuous and sometimes symmetric data with $p/n \to \theta \in (0,\infty)$.  

Theorem~\ref{thm: 1} provides a foundation for graphical diagnostics to detect row-wise dependence in data matrices, even when the underlying distributions are discrete or heterogeneous across columns and $p/n \to 0$. We illustrate, through a simple empirical study, that ignoring heterogeneity in the distributions of the data matrix can lead to misleading conclusions, in particular, spurious detection of dependence. This highlights the importance of explicitly accounting for non-identically distributed observations in high-dimensional independence testing. A comparison with an alternative approach based on \citet{dornemann2025ties} further indicates that methods designed under homogeneous assumptions may suffer from size distortions and reduced power in heterogeneous settings.  Details of this diagnostic are given in Section~\ref{sec: application}.  
While a formal significance test could be constructed by applying an appropriately chosen LSS of $\bT - {\rm D}(\bT)$, deriving the corresponding asymptotic distribution lies beyond the scope of the present work.

\section{Convergence of empirical spectral distributions} \label{sec: 2}  
\noindent We make the following assumptions on the data $\{X_{ki}: 1\leq k \leq p, 1\leq i\leq n\}$. 
\vskip 5pt
\noindent \textbf{Assumption 1}: For all $p,n \geq 1$, the collection $\{X_{ki}: 1\leq k \leq p, 1\leq i\leq n\}$ consists of independent random variables.
\vskip 3pt
\noindent Assumption 1 is crucial for our proof techniques. The dependent case requires different methods and will be investigated in future work. Some partial results for dependent settings are available in \citet{kendep2023}, though further generalization is needed.
\vskip 3pt
\noindent \textbf{Assumption 2}:   We have $\bP(X_{ki}>X_{kj}) = \bP(X_{ki}<X_{kj})$ for all  $1\leq k \leq p, 1\leq i,j\leq n$ and $p,n \geq 1$.
\begin{remark} \label{rem: A2} Assumption 2 is required to ensure that $\mathbb{E}(\Sg(X_{ki}-X_{kj})) = 0$ for all $k,i,j$. 
Note that for each fixed $k$, if the collection $\{X_{ki}\}_{i=1}^n$ is identically distributed across $i$, then Assumption 2 is automatically satisfied. Indeed, $X_{ki}-X_{kj}$ and $X_{kj}-X_{ki}$ are identically distributed, which implies that the distribution of $X_{ki}-X_{kj}$ is symmetric about zero. Consequently,
\(
\mathbb{E}\big(\Sg(X_{ki}-X_{kj})\big)=0 .
\)

Note that Assumption 2 does not require each $X_{ki}$ to have a symmetric distribution. For example, let $X$ and $Y$ be independent with 
\(
\mathbb{P}(X=-1)=\mathbb{P}(X=0.5)=\mathbb{P}(Y=-2)=\mathbb{P}(Y=3)=0.5.
\)
Then $\mathbb{P}(X>Y)=\mathbb{P}(X<Y)=0.5$, although neither $X$ nor $Y$ is symmetrically distributed.

On the other hand, even if each $X_{ki}$ is symmetrically distributed about its median, Assumption 2 need not hold. For instance, let $X$ and $Y$ be independent uniform random variables on $(-1,1)$ and $(2,3)$, respectively. Then $\mathbb{P}(X<Y)=1$ and $\mathbb{P}(X>Y)=0$.

However, if for each $k$ the variables $\{X_{ki}\}_{i=1}^n$ are symmetric about a common median (say $0$, without loss of generality) across all $i$, then Assumption 2 holds. In this case $X_{ki}$ and $-X_{ki}$ are identically distributed, and therefore
\(
\mathbb{P}(X_{ki}>X_{kj})=\mathbb{P}(-X_{ki}>-X_{kj})=\mathbb{P}(X_{ki}<X_{kj})
\)
for all $i\neq j$.

\end{remark}
Since each $T_{kl}$ is a U-statistic, its asymptotic behaviour is governed by the Hoeffding decomposition. In the high-dimensional regime considered here, the first-order (linear) projection dominates, and therefore determines the limiting spectral distribution. 
To define the Hoeffding projection, let
 $F_{kj}(t) = \bP(X_{kj}\leq t) \ \text{and}\ \bar{F}_{kj}(t) = \bP(X_{kj} < t), \ \text{for all}\ \ t \in \mathbb{R}.$  Let $Y_{k,i,j}$ be the conditional expectation of $\Sg (X_{ki}-X_{kj})$ given $X_{ki}$, i.e., for all $1 \leq i \neq j \leq n$ and $1 \leq k \leq p$, we have 
\begin{eqnarray}
Y_{k,i,j} = \bE(\Sg(X_{ki}-X_{kj})|X_{ki})
&=& \bP(X_{ki}>X_{kj}| X_{ki}) - \bP(X_{ki} < X_{kj}|X_{ki})
= \bar{F}_{kj}(X_{ki})  + F_{kj}(X_{ki})-1. \nonumber 
\end{eqnarray}
\noindent The following properties  of $Y_{k,i,j}$, are immediate from Assumption 1.
\vskip 3pt

\noindent (1)  \ $Y_{k,i,j}$ is a measurable function of $X_{ki}$.
\quad  (2) \ $Y_{k,i,j}$ is independent of $\{X_{lj}: \ 1 \leq l \leq  p, \ 1 \leq j \leq n, 
\ l \neq k,\ j \neq i\}$. 
\quad 
 (3) \ $\{Y_{k,i,j}\}$ are independent across $1 \leq k \leq p$,   $1 \leq i \leq  n$. 
\vskip 3pt
\noindent For each $1 \leq k \leq p$ and $1 \leq i \leq n$, define the variance--covariance matrix of  $(Y_{k,i,j}:\ 1 \leq j \leq n)$ by the $n \times n$ matrix
\begin{eqnarray}
\mathbb{G}_{k,i} =
\big(\mathrm{Cov}(Y_{k,i,j_1}, Y_{k,i,j_2})\big)_{1 \leq j_1, j_2 \leq n}.
\label{eqn: defGki} \nonumber 
\end{eqnarray}

\noindent The first-order Hoeffding projection of $\mathbb{T}$ is given by the $p \times p$ matrix $G$ whose $(k,l)$-th entry is
\begin{eqnarray}
G_{kl} = \frac{1}{n(n-1)} \sum_{i \neq j} Y_{k,i,j}\, Y_{l,i,j}. \nonumber
\end{eqnarray}
\begin{remark} \label{rem: GG}
Suppose $\{X_{ki}\}$ satisfy Assumption 1 and are identically distributed across $i$. Then  $Y_{k,i,j}$ does not depend on $j$, so we may write $Y_{k,i,j} = Y_{ki}$ (say) for all $j$. Further, for each fixed $k$, the collection $\{Y_{ki}\}_{i=1}^n$ is identically distributed. Consequently, for each $k$, the matrix $\mathbb{G}_{k,i} $ reduces to
\(
\mathbb{G}_{k,i}^{\rm{IID}} = \mathrm{Var}(Y_{k1}) \, \mathbb{J}_n,
\)
where $\mathbb{J}_n$ denotes the $n \times n$ matrix with all entries equal to $1$.  Moreover, $G$ reduces to $G^{\rm{IID}}$ whose $(k,l)$-th entry is $G_{kl}^{\rm{IID}} = \frac{1}{n} \sum_{i =1}^n Y_{ki}\, Y_{li}$. Further, $\{Y_{ki}\}$'s are independent across both $k$ and $i$. 
\end{remark}

\noindent In Section~\ref{sec: hoeffneg}, we show that, after appropriate centering and scaling, the ESD of $\mathbb{T}-D(\mathbb{T})$ and its first-order projection $G$ coincide in the limit as $p,n \to \infty$ with $p/n \to 0$. Consequently, the limiting spectral distribution is determined by the asymptotic behaviour of the collection of covariance matrices $\{\mathbb{G}_{k,i}\}$. The following Assumptions~G1 and G2 provide the necessary control on these matrices.
Let $\mathrm{NC}_2(2R)$ denote the set of all non-crossing pair partitions of $[2R]$. Define $\sigma_R(i)=i+1$ for $1\le i\le R-1$ and $\sigma_R(R)=1$, let $\delta_{ab}$ be the Kronecker delta, that is, $\delta_{ab}=1$ if $a=b$ and $0$ otherwise.


\vskip 5pt
\noindent \textbf{Assumption G1.} There exists $g_1 \in \mathbb{R}$ such that
\(\max(\sqrt{np},(n/p))\left( n^{-2}p^{-1}\sum_{k,i} \tr(\mathbb{G}_{k,i}) - g_1\right) \to 0.\) 

\vskip 5pt
\noindent \textbf{Assumption G2.} There exist non-negative constants $g_{2\pi}$ such that, for all $R \ge 1$ and $\pi \in \mathrm{NC}_2(2R)$,
\begin{equation}
(n^{-R}p^{-1})\sum_{k_1,\ldots,k_{2R}}
\left(
\prod_{(b,s)\in \pi}
\Big(
p^{-1}n^{-2}\sum_{i_b}\tr(\mathbb{G}_{k_b,i_b}\mathbb{G}_{k_{\sigma_{2R}(b)},i_b})
\,
\delta_{k_b,k_{\sigma_{2R}(s)}}
\delta_{k_s,k_{\sigma_{2R}(b)}}
\delta_{i_b,i_s}
\Big)
\right)
\to g_{2\pi}. \label{eqn: G2g}
\end{equation}
\noindent Clearly, $g_{2\pi} \le C^R$ for some constant $C>0$ and for all $\pi \in \mathrm{NC}_2(2R)$. Hence, by Carleman's condition, the sequence of moments $\sum_{\pi \in \mathrm{NC}_2(2R)} g_{2\pi}$ uniquely determines a probability distribution.


\begin{remark} \label{rem: GGG}
Suppose $\{X_{ki}\}$ satisfy Assumption 1 and are identically distributed across $i$. Then
Assumption G1 is satisfied provided there exists a constant $g_1$ such that
\begin{eqnarray}
\max(\sqrt{np},(n/p))\left( \frac{1}{p}\sum_{k=1}^p \mathrm{Var}(Y_{k1}) - g_1 \right) \to 0. \label{eqn: g1A}
\end{eqnarray}
Using $\mathbb{G}_{k,i} = \mathrm{Var}(Y_{k1}) \mathbb{J}_n$ and $\tr(\mathbb{J}_n^2)=n^2$, the left side of (\ref{eqn: G2g}) reduces to
\begin{eqnarray}
&& p^{-(R+1)}\sum_{k_1,\ldots,k_{2R}}
\prod_{(b,s)\in \pi}
\Big(
\mathrm{Var}(Y_{k_b1})\, \mathrm{Var}(Y_{k_{\sigma_{2R}(b)}1})
\,
\delta_{k_b,k_{\sigma_{2R}(s)}}
\delta_{k_s,k_{\sigma_{2R}(b)}}
\Big).
\label{eqn:abG_reduced}
\end{eqnarray}
For any $\pi \in \mathrm{NC}_2(2R)$, the number of distinct indices in
\(
\{(k_1,\ldots,k_{2R}) : \delta_{k_b,k_{\sigma_{2R}(s)}} \delta_{k_s,k_{\sigma_{2R}(b)}}=1, \ (b,s)\in \pi\}
\)
is exactly $R+1$. Hence, the sum in \eqref{eqn:abG_reduced} factorizes into products of terms of the form
\(
\frac{1}{p}\sum_{k=1}^p \big(\mathrm{Var}(Y_{k1})\big)^r,
\)
for various integers $r \geq 1$ determined by the structure of $\pi$.
For example, when $R=4$:
\begin{itemize}
\item The pairing $\{(1,2),(3,4),(5,6),(7,8)\}$ yields
\(
\left(\frac{1}{p}\sum_{k=1}^p (\mathrm{Var}(Y_{k1}))^4\right)
\left(\frac{1}{p}\sum_{k=1}^p \mathrm{Var}(Y_{k1})\right)^4,
\)
\item The pairing $\{(1,4),(2,3),(5,8),(6,7)\}$ yields
\(
\left(\frac{1}{p}\sum_{k=1}^p (\mathrm{Var}(Y_{k1}))^2\right)^3
\left(\frac{1}{p}\sum_{k=1}^p \mathrm{Var}(Y_{k1})\right)^2.
\)
\end{itemize}
Thus, Assumption G2 holds provided the following condition is satisfied.
\vskip 5pt
\noindent \textbf{Assumption G2-a}: The ESD of the diagonal matrix
\(
\Sigma_p = \mathrm{diag}\big(\mathrm{Var}(Y_{11}), \ldots, \mathrm{Var}(Y_{p1})\big)
\)
converges weakly to a limiting probability distribution. 
\end{remark}

\vskip 5pt
\begin{theorem}\label{thm: A}
Suppose Assumptions~1, 2, G1, and G2 hold, and $p := p(n) \to \infty$ with $p/n \to 0$ as $n \to \infty$. Then 
the ESD of 
\(
\sqrt{n/p}\big(\mathbb{T}-D(\mathbb{T})\big)
\) or $2\sqrt{n/p}(G - g_1\mathbf{I}_p)$ 
converges weakly, almost surely, to a probability distribution whose odd-order moments vanish and whose $2R$-th moment is given by
\(
2^{2R}\sum_{\pi \in \mathrm{NC}_2(2R)} g_{2\pi}.
\)
\end{theorem}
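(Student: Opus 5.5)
The argument has two parts. First, reduce from $\mathbb{T}-D(\mathbb{T})$ to the first-order projection $G$. Second, compute the moments of $2\sqrt{n/p}(G-g_1\mathbf{I}_p)$ by a combinatorial moment method.

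\textbf{Reduction to $G$.} Write the Hoeffding decomposition entrywise. For $k\neq l$, $\Sg(X_{ki}-X_{kj})=Y_{k,i,j}-Y_{k,j,i}+\psi_{k,i,j}$, where the remainder $\psi_{k,i,j}$ is degenerate: it has zero conditional mean given either $X_{ki}$ or $X_{kj}$. Assumption 2 is what makes the constant term vanish, and it also gives $Y_{k,j,i}=-\mathbb{E}(\Sg(X_{ki}-X_{kj})\mid X_{kj})$. Multiplying out $T_{kl}$ then gives the decomposition
\[
\mathbb{T}-D(\mathbb{T}) = 2(G-D(G)) + \text{cross terms} + \text{degenerate terms}.
\]
\begin{itemize}
\item For the degenerate part $\mathbb{W}$ I would bound $\mathbb{E}\,\tr(\mathbb{W}^2)$ (or a fourth moment, to get almost sure convergence via Borel--Cantelli). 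Its entries are of order $n^{-1}$, so $(n/p)\,p^{-1}\tr(\mathbb{W}^2)=O(n/p\cdot p\cdot n^{-2})=O(p^{-1}\cdot p/n)\to0$. The rank inequality and the Lévy-distance bound $L^3(F^A,F^B)\le p^{-1}\tr(A-B)^2$ then make this part negligible.
\item The cross terms with $i\neq j$ are handled the same way.
\item For the diagonal, show that $\sqrt{n/p}\,(D(G)-g_1\mathbf{I}_p)\to0$ in operator or Frobenius-average sense. The fluctuation of $G_{kk}$ about its mean is $O(n^{-1/2})$, which contributes $O(p^{-1/2})$ after scaling. The mean $n^{-2}\sum_i\tr\mathbb{G}_{k,i}$ averages over $k$ to $g_1$ at rate $o(\sqrt{p/n})$ by Assumption G1; Frobenius control needs the average of the squared deviations, which I would obtain from the uniform boundedness $|Y|\le1$ together with G1.
\end{itemize}

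\textbf{Moments of the projection.} Set $\mathbb{H}=\sqrt{n/p}\,(G-D(G))$. I would show that
\[
p^{-1}\mathbb{E}\,\tr\mathbb{H}^{m}\to 2^{-m}\cdot\big(\text{claimed moment}\big)
\]
and that $\mathrm{Var}(p^{-1}\tr\mathbb{H}^m)=O(p^{-2})$, which gives almost sure convergence by Borel--Cantelli. Expand the trace as a sum over off-diagonal cycles $k_1\to k_2\to\cdots\to k_m\to k_1$ with pairs $(i_b,j_b)$. Each factor is $n^{-2}Y_{k_b,i_b,j_b}Y_{k_{b+1},i_b,j_b}$, with all $Y$ bounded and centered. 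Independence across $(k,i)$ forces every variable $X_{k,i}$ that appears to appear at least twice. A standard counting argument, using $p/n\to0$ so that the $i$-indices are the cheap ones, shows:
\begin{itemize}
\item only configurations in which the $m$ edges $\{(k_b,k_{b+1}),i_b\}$ are matched in pairs contribute, with $i_b=i_s$ and with the $k$-labels matched in reverse orientation (the $\delta$ constraints in G2);
\item odd $m$ and crossing pairings are of lower order.
\end{itemize}
For a matched pair $(b,s)$, summing over the free indices $j_b,j_s$ produces $\sum_{j,j'}\mathrm{Cov}(Y_{k,i,j},Y_{k,i,j'})\,\mathrm{Cov}(Y_{l,i,j},Y_{l,i,j'})=\tr(\mathbb{G}_{k,i}\mathbb{G}_{l,i})$. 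This is exactly the factor in \eqref{eqn: G2g}, and the normalization $(n/p)^{R}n^{-4R}$ matches the prefactors there, so the limit is $\sum_{\pi}g_{2\pi}$ by Assumption G2. The factor $2^{2R}$ comes from the scaling of $2G$.

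\textbf{Main obstacle.} I expect the combinatorial step to be the hard one. The difficulty is that $j_b$ need not coincide across the two members of a pair, so the $Y_{k,i,j}$ are not independent in $j$. I would handle this by conditioning on the rows and treating each $X_{k,i}$ as a vertex; vertices of multiplicity at least $3$, and coincidences between $j$-indices and $i$-indices, must be shown to cost a factor of $p/n$ or $1/p$. Carleman's condition, already noted after G2, then identifies the limit uniquely.
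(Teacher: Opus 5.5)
Your reduction of $\sqrt{n/p}(\mathbb{T}-D(\mathbb{T}))$ to $2\sqrt{n/p}(G-D(G))$, and your moment computation for the zero-diagonal matrix $\mathbb{H}$, are sound. The gap is the diagonal step, which you need in order to pass to $2\sqrt{n/p}(G-g_1\mathbf{I}_p)$. Put $a_k=n^{-2}\sum_i\mathrm{tr}(\mathbb{G}_{k,i})=\mathbb{E}G_{kk}+O(1/n)$. Frobenius control requires $(n/p)\,p^{-1}\sum_k(a_k-g_1)^2\to0$. Assumption G1 only controls the signed average $p^{-1}\sum_k a_k$, and $|Y|\le 1$ only gives $|a_k-g_1|=O(1)$. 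Together they bound this quantity by $O(n/p)$, which diverges.

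The step cannot be repaired, because that half of the conclusion is false under G1--G2 alone. Take $p$ even, let the rows with odd $k$ be i.i.d.\ continuous and the rows with even $k$ be i.i.d.\ symmetric $\pm1$. Then $\mathrm{Var}(Y_{k1})$ alternates between $1/3$ and $1/4$. Assumptions 1--2 hold, G1 holds exactly with $g_1=7/24$, and G2 holds by Remark~\ref{rem: GGG}. Yet the diagonal entries of $2\sqrt{n/p}(G-g_1\mathbf{I}_p)$ are $\pm\tfrac{1}{12}\sqrt{n/p}+O_P(p^{-1/2})$. Since the off-diagonal part has $p^{-1}\mathrm{tr}(\cdot)^2=O(1)$, Cauchy interlacing on the compressions to the odd and even coordinates sends about half the eigenvalues to $+\infty$ and half to $-\infty$. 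The same phenomenon occurs in Example~\ref{Example: A}, where $a_k$ depends on the parity of $k$. An extra hypothesis such as $(n/p^2)\sum_k(a_k-g_1)^2\to0$ is needed; Assumption~3 supplies it.

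The paper's proof has the same hole in two places:
\begin{itemize}
\item for the term $D$ it replaces $\sum_k(a_k-g_1)a_k$ by $O(\sum_k(a_k-g_1))$, and cites Assumption~3 there;
\item in Lemma~\ref{lem: lem2} it bounds the diagonal matchings $(k_l,i_l)=(k_{\sigma_R(l)},i_l)$ by pulling a $k_1$-dependent factor out of the sum over $k_1$.
\end{itemize}

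For $\sqrt{n/p}(\mathbb{T}-D(\mathbb{T}))$, by contrast, your route is better than the paper's. The paper sends everything through $2\sqrt{n/p}(G-g_1\mathbf{I}_p)$ and therefore needs exactly the control above. With $G$ summed over $i\neq j$ as in Section~\ref{sec: 2}, $\mathbb{T}-D(\mathbb{T})=2(G-D(G))+\mathbb{W}$ holds exactly, so simply drop the diagonal step.

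In $p^{-1}\mathbb{E}\,\mathrm{tr}\,\mathbb{H}^m$ the constraint $k_r\neq k_{r+1}$ does the work. An $i$-value used at only one position $r$ involves two distinct variables $X_{k_r i_r}$ and $X_{k_{r+1} i_r}$, each appearing once, so the term vanishes. Hence $|I|\le m/2$, and with $|K|+|I|\le m+1$ every term is $O((p/n)^{m/2-|I|})$. G1 is never used.

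Your concern about $j$--$i$ coincidences is also unnecessary. $Y_{k,i,j}$ is random only through $X_{ki}$; the index $j$ enters only through the deterministic $F_{kj}$ and $\bar F_{kj}$. So the free $j$-sums simply produce $\mathrm{tr}(\mathbb{G}_{k,i}\mathbb{G}_{l,i})$.
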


\vskip 5pt
\noindent Next, recall the semi-circle law with probability density function
\begin{equation}
f(x) =
\begin{cases}
\dfrac{2}{\pi R^2}\sqrt{R^2 - x^2}, & -R < x < R,\\
0, & \text{otherwise},
\end{cases} \nonumber 
\end{equation}
which we denote by $S_R$.
In general, the LSD obtained in Theorem~\ref{thm: A} does not reduce to the semi-circle law. Such a reduction occurs only in the special case where $g_{2\pi} = C^R$ for some constant $C>0$ and for all $\pi \in \mathrm{NC}_2(2R)$, in which case the limit coincides with the semi-circle law $2S_{2C}$.

As mentioned in Section \ref{sec: sintro}, to the best of our knowledge, \citet{dornemann2025ties} is the only existing work to compare. However, their analysis is restricted to independent and identically distributed observations, allowing for both discrete and continuous distributions, and does not cover the case of non-identically distributed data.

We now present an example involving observations that are non-identically distributed across both rows and columns. This example demonstrates that our results remain valid in this more general setting, where the results of \citet{dornemann2025ties} are not applicable.
\begin{example}\label{Example: A}
Let $X_{ki}\sim \mathrm{Cauchy}(0,\sigma_{ki})$ with $\sigma_{ki}=1$ if $k,i$ are odd, $10$ if both are even, $20$ if $k$ is odd and $i$ is even, and $30$ if $k$ is even and $i$ is odd; thus $\{X_{ki}\}$ are non-identically distributed in both $k$ and $i$. Moreover, $\{X_{ki}\}$ are independent across both indices, and hence Assumption~1 is satisfied. Since $X_{ki}-X_{kj}$ is symmetric about $0$, Assumption~2 is also satisfied. It is easy to see that, since only finitely many distinct probability distributions are involved in the data matrix, Assumptions G1 and G2 are satisfied. The first two rows of Table~\ref{tab:Table1aA} agree, showing that the theoretical moments derived from Theorem~\ref{thm: A} match well with the simulated moments for this example. In contrast, the last two rows of Table~\ref{tab:Table1aA} do not agree, indicating that the results of \citet{dornemann2025ties} are not applicable in this setting.

\begin{table}[h!]
\centering
\caption{Moment comparison for Example~\ref{Example: A} with $n=900$ and $p=30$.}
\label{tab:Table1aA}
\begin{tabular}{|c|c|c|c|c|c|c|}
\hline
Order of moments & $r=2$ & $r=3$ & $r=4$ & $r=5$ & $r=6$\\
\hline
Theoretical moment based on our result &  1.08 & 0  & 8.97 & 0 & 14.58\\
\hline
Empirical moment of ECDF of $(3/2)\sqrt{n/p}\big(\mathbb{T}-D(\mathbb{T})\big)$ & 0.97  & 0  & 9.12 & 0 & 14.23 \\
\hline
Empirical moment of ECDF of $(3/2)\sqrt{n/p}\mathbf{T}$ where & 1.01 & 0 & 9.27 & 0 & 14.28 \\
 $\mathbf{T}$ is defined after the equation (6) in \citet{dornemann2025ties} &&&&& \\
 \hline
 Standard semi-circle moments & 1 & 0 & 2& 0 & 5 \\
 \hline
\end{tabular}
\end{table}
\end{example}

\subsection{IID observations}
In this section, we focus on the special case of independent and identically distributed observations and compare our findings with those of \citet{dornemann2025ties}, which, to the best of our knowledge, contains results closest to our setting.
Following corollary is immediate from Remarks \ref{rem: A2} - \ref{rem: GGG} and (22.10) in \citet{NS2006}. 

\begin{cor} \textbf{Identically distributed observations}. \label{cor: IID}
Suppose $\{X_{ki}\}$ satisfy Assumption 1 and are identically distributed across $i$. Further suppose (\ref{eqn: g1A}) and Assumption G2-a are satisfied, and $p := p(n) \to \infty$ with $p/n \to 0$ as $n \to \infty$. Then 
the ESD of 
\(
\sqrt{n/p}\big(\mathbb{T}-D(\mathbb{T})\big)
\) or $2\sqrt{n/p}(G^{\rm{IID}} - g_1\mathbf{I}_p)$ 
converges weakly, almost surely, to $2S_2 a$ where $S_2$ and $a$ are free.
\end{cor}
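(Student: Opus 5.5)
The corollary follows by applying Theorem~\ref{thm: A} and then identifying the limiting moments with those of a free product. Here $a$ is a random variable whose law is the limit in Assumption G2-a, that is, the LSD of $\Sigma_p$.

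\textbf{Step 1: the hypotheses of Theorem~\ref{thm: A} hold.}
\begin{itemize}
\item Assumption 1 is assumed.
\item Assumption 2 holds automatically, by Remark~\ref{rem: A2}, since each row $\{X_{ki}\}_{i}$ is identically distributed.
\item By Remark~\ref{rem: GG}, $\mathbb{G}_{k,i}=\mathrm{Var}(Y_{k1})\mathbb{J}_n$, so $\tr(\mathbb{G}_{k,i})=n\,\mathrm{Var}(Y_{k1})$. Then $n^{-2}p^{-1}\sum_{k,i}\tr(\mathbb{G}_{k,i})=p^{-1}\sum_k \mathrm{Var}(Y_{k1})$, and (\ref{eqn: g1A}) is exactly Assumption G1.
\item For Assumption G2 we use (\ref{eqn:abG_reduced}). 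For each $\pi\in \mathrm{NC}_2(2R)$ the Kronecker constraints leave exactly $R+1$ free indices. The sum therefore factorizes into a product of normalized traces $p^{-1}\sum_k \mathrm{Var}(Y_{k1})^{r}=\int x^r\,dF^{\Sigma_p}(x)$. Since $0\le \mathrm{Var}(Y_{k1})\le 1$, the spectra are uniformly bounded, so under G2-a each such moment converges to $m_r:=\int x^r\,d\mu_a$. Hence $g_{2\pi}$ exists and equals the corresponding product of moments of $a$.
\end{itemize}
Theorem~\ref{thm: A} then gives almost sure weak convergence of the ESD of $\sqrt{n/p}(\mathbb{T}-D(\mathbb{T}))$, and of $2\sqrt{n/p}(G^{\rm IID}-g_1\mathbf{I}_p)$, to the law with vanishing odd moments and $2R$-th moment $2^{2R}\sum_{\pi}g_{2\pi}$.

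\textbf{Step 2: identifying the limit as $2S_2a$.} Note that $m_1=g_1$. We must show that
\[
\sum_{\pi\in\mathrm{NC}_2(2R)} g_{2\pi}=\varphi\big((sa)^{2R}\big),
\]
where $s$ is a standard semicircular element ($S_2$, of variance $1$) free from $a$.

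By (22.10) in \citet{NS2006}, the moment formula for products of free variables gives
\[
\varphi\big((sa)^{2R}\big)=\sum_{\pi\in NC(2R)}\kappa_\pi[s,\dots,s]\,\varphi_{K(\pi)}[a,\dots,a],
\]
with $K$ the Kreweras complement. Since the free cumulants of $s$ vanish except $\kappa_2=1$, only $\pi\in\mathrm{NC}_2(2R)$ survive. Each surviving term is $\prod_{V\in K(\pi)} m_{|V|}$, and $K(\pi)$ has $R+1$ blocks.

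So the task is to check that the $R+1$ equivalence classes of the indices $k$, cut out by the constraints $\delta_{k_b,k_{\sigma(s)}}\delta_{k_s,k_{\sigma(b)}}$, coincide with the blocks of $K(\pi)$, with block sizes matching the exponents $r$. Each pair $(b,s)$ identifies $k_b$ with $k_{s+1}$ and $k_s$ with $k_{b+1}$, which is the standard description of the Kreweras complement on the interlaced points. The examples in Remark~\ref{rem: GGG} confirm this. For $\{(1,2),(3,4),(5,6),(7,8)\}$ the Kreweras complement has blocks $\{2\},\{4\},\{6\},\{8\},\{1,3,5,7\}$, giving $m_4m_1^4$, which matches.

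\textbf{Step 3: conclusion.} The limit therefore has moments $2^{2R}\varphi((sa)^{2R})$, which are the moments of $2sa$, where $2s$ has law $S_2$ scaled appropriately, i.e.\ the law denoted $2S_2a$. The odd moments vanish by symmetry of $s$. The moments are bounded by $C^R$, so Carleman's condition gives uniqueness, and the limit is the law of $2S_2a$ with $S_2$ and $a$ free.

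The main obstacle is the combinatorial identification in Step 2: the index-identification graph defined by the $\delta$'s must be exactly the Kreweras complement, and in particular the block multiplicities must correspond to the exponents of $\mathrm{Var}(Y_{k1})$. I would prove this by induction on $R$, removing an adjacent pair $(b,b+1)$ of $\pi$. Such a removal contributes a singleton block $\{b+1\}$ of $K(\pi)$, giving a factor $m_1$, and merges $k_b$ with $k_{b+2}$, which reduces the problem to $\pi$ restricted to the remaining $2R-2$ points.
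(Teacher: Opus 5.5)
Your proposal follows the paper's own proof, only written out in full. The paper declares the corollary immediate from Remarks~\ref{rem: A2}--\ref{rem: GGG} (which give Assumptions~2, G1 and G2), Theorem~\ref{thm: A} and the free moment--cumulant formula for products, and your identification of the classes cut out by $k_b=k_{\sigma_{2R}(s)}$, $k_s=k_{\sigma_{2R}(b)}$ with the blocks of the Kreweras complement is correct, with each $k_j$ carrying one factor $\mathrm{Var}(Y_{k_j1})$; it supplies exactly the step the paper leaves implicit.

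One caution, which you inherit from Theorem~\ref{thm: A} rather than introduce. When $a$ is non-degenerate, the diagonal entries of $2\sqrt{n/p}(G^{\rm IID}-g_1\mathbf{I}_p)$ are about $2\sqrt{n/p}\,(\mathrm{Var}(Y_{k1})-g_1)$ and diverge, so the conclusion about that matrix cannot hold as stated; the paper's treatment of the term $D$ in Section~\ref{sec: hoeffneg} invokes Assumption~3, which is not a hypothesis here. The conclusion for $\sqrt{n/p}(\mathbb{T}-D(\mathbb{T}))$ should instead be routed through the off-diagonal matrix $2\sqrt{n/p}(G^{\rm IID}-D(G^{\rm IID}))$, whose $2R$-th moments do converge to $2^{2R}\sum_{\pi}g_{2\pi}$.
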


\begin{cor}  \textbf{Identically distributed continuous observations}. \label{cor: IID1}
Suppose $\{X_{ki}\}$'s are continuous random variables,  satisfy Assumption 1 and are identically distributed across $i$. Then $\{Y_{ki}\}$'s are all independent uniform random variables on $(-1,1)$. Hence $\mathbb{V}{\rm{ar}}(Y_{ki}) = 1/3$ for all $k,i$. Thus $(\ref{eqn: g1A})$ holds true for $g_1 = 1/3$ and Assumption G2-a are satisfied as the LSD of $\Sigma_p$ exists and is degenerated at $1/3$. Also note that $D(\mathbb{T}) = \mathbf{I}_p$. Therefore, the ESD of  \(\sqrt{n/p}\big(\mathbb{T}-\mathbf{I}_p\big)\) converges weakly, almost surely, to $(2/3)S_2$ which is  Theorem 2.5 (1) of \citet{dornemann2025ties} for continuous observations. 
\end{cor}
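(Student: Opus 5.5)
The statement to prove is Corollary~\ref{cor: IID1}. The plan is to reduce it to Corollary~\ref{cor: IID} (equivalently, Theorem~\ref{thm: A}). That means verifying the hypotheses and then identifying the limit $2S_2 a$ when $a$ is degenerate.

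\emph{Distribution of $Y_{ki}$.} Fix $k$. Since $X_{k1},\dots,X_{kn}$ are identically distributed with a common continuous cdf $F_k$, we have $\bar F_{kj}=F_{kj}=F_k$ for every $j$. Hence
\[
Y_{k,i,j}=2F_k(X_{ki})-1=:Y_{ki}.
\]
By the probability integral transform, $F_k(X_{ki})$ is Uniform$(0,1)$ because $F_k$ is continuous. So $Y_{ki}$ is Uniform$(-1,1)$, and its variance is $\int_{-1}^1 y^2\,dy/2=1/3$. Independence of the $Y_{ki}$ across $k$ and $i$ follows from Assumption~1 and property (3).

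\emph{Verifying the hypotheses.} Every $\mathrm{Var}(Y_{k1})$ equals $1/3$, so the left side of (\ref{eqn: g1A}) is identically $0$ for $g_1=1/3$, whatever the rate factor. The matrix $\Sigma_p=\tfrac13\mathbf I_p$ has ESD $\delta_{1/3}$ for every $p$, so Assumption G2-a holds trivially. Assumption 2 holds automatically by Remark~\ref{rem: A2}. For the diagonal: ties occur with probability zero under continuity and independence, so each diagonal entry $(n(n-1))^{-1}\sum_{i\ne j}\bI\{X_{ki}\neq X_{kj}\}$ equals $1$ almost surely. Thus $D(\mathbb T)=\mathbf I_p$ almost surely, and $\mathbb T-D(\mathbb T)=\mathbb T-\mathbf I_p$.

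\emph{Identifying the limit.} Corollary~\ref{cor: IID} gives almost sure weak convergence of the ESD of $\sqrt{n/p}(\mathbb T-\mathbf I_p)$ to $2S_2a$, with $a$ free from $S_2$ and distributed as the LSD of $\Sigma_p$, namely $\delta_{1/3}$. A constant is free from everything, so $2S_2a=\tfrac23 S_2$ in distribution.

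As a cross-check against Theorem~\ref{thm: A}, one can compute the moments directly. In (\ref{eqn:abG_reduced}), each pair contributes $(1/3)^2$, and the $R+1$ free indices cancel the factor $p^{-(R+1)}$, giving $g_{2\pi}=3^{-2R}$. The $2R$-th moment is then
\[
2^{2R}\,3^{-2R}\,|\mathrm{NC}_2(2R)| = (2/3)^{2R}\,\mathrm{Cat}_R .
\]
These are exactly the moments of $\tfrac23 S_2$, since $S_2$ has Catalan even moments. By Carleman's condition these moments determine the law, which completes the argument. There is no real obstacle here; the only points needing care are continuity of $F_k$ (for uniformity of $Y_{ki}$ and for the absence of ties) and the sign conventions in $Y$.
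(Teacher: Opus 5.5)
Your proposal is correct and follows the paper's own argument. You show that $Y_{ki}=2F_k(X_{ki})-1$ is Uniform$(-1,1)$ with variance $1/3$, so (\ref{eqn: g1A}) holds with $g_1=1/3$, Assumption G2-a holds trivially with $\Sigma_p=\tfrac13\mathbf{I}_p$, and $D(\mathbb{T})=\mathbf{I}_p$ almost surely; you then read off $2S_2a=(2/3)S_2$ from Corollary~\ref{cor: IID} with $a$ degenerate at $1/3$. Your extra moment check, $2^{2R}3^{-2R}|\mathrm{NC}_2(2R)|=(2/3)^{2R}\mathrm{Cat}_R$, agrees with Theorem~\ref{thm: A} and is a harmless confirmation.
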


\begin{remark} \textbf{Comparison with \citet{dornemann2025ties}}. \label{rem: heiny}
Suppose $\{X_{ki}\}$ satisfy Assumption 1 and are identically distributed across $i$.
\citet{dornemann2025ties} considered the matrix $\mathbf{T}$ defined after equation (6) in their paper. In their Sections 4.4 and 4.5—analogous to our Section 4.1.2, but requiring additional arguments to control the normalization factor via the Dvoretzky--Kiefer--Wolfowitz inequality—they showed that the LSD of $\sqrt{n/p}\mathbf{T}$ coincides with that of $(2/3)\sqrt{n/p}(\tilde{G}^{\mathrm{IID}}-\bI_p)$. The matrix $\tilde{G}^{\mathrm{IID}}$ has zero diagonal entries, and for $k \neq l$, its $(k,l)$-th entry is given by
\(
\sum_{i=1}^n \tilde{Y}_{ki}\tilde{Y}_{li},
\)
where
\(
\tilde{Y}_{ki} = \left(\sum_{i=1}^n (Y_{ki}-\bar{Y}_k)^2\right)^{-1/2} (Y_{ki} - \bar{Y}_k),
\ \
\bar{Y}_k = \frac{1}{n}\sum_{i=1}^n Y_{ki}.
\)
Consequently, the asymptotic behavior of $\mathbf{T}$ in \citet{dornemann2025ties} differs from that of $\sqrt{n/p}\big(\mathbb{T} - D(\mathbb{T})\big)$ due to the following structural differences between $Y_{ki}$ and $\tilde{Y}_{ki}$:

\begin{center}
\begin{tabular}{|c|c|c|}
\hline 
Property & $Y_{ki}$ & $\tilde{Y}_{ki}$ \\ 
\hline 
Independence across $k$ & Yes & Yes \\ 
\hline 
Independence across $i$ & Yes & No \\ 
\hline 
Row vectors lie on the unit sphere & No & Yes \\ 
\hline 
Expectation & $0$ & $0$ \\ 
\hline 
Variance & depends on $k$ & $n^{-1}$ \\ 
\hline 
$r$-th raw moment & $O(1)$ & $O(n^{-r/2})$ \\ 
\hline 
Expectation of products of $r$ variables & $0$ & $O(n^{-r})$ \\ 
with same $k$ and distinct $i$'s & & \\ 
\hline 
\end{tabular}
\end{center}
Therefore, the results in Corollary~\ref{cor: IID} and Theorem~2.5 of \citet{dornemann2025ties} are not directly comparable. Nevertheless, by following a similar combinatorial approach as in the proof of Theorem~\ref{thm: A}, and exploiting the weak dependence structure (captured by the last row of the table) together with the decay of higher-order moments (second last row), one can recover Theorem~2.5(1) of \citet{dornemann2025ties}.
Details of the modifications required in the proof of Theorem~\ref{thm: A} to obtain Theorem~2.5(1) of \citet{dornemann2025ties} are provided in Remark \ref{rem: heinypf}.
\end{remark}

\noindent We now present an examples where the non-degeneracy condition in Assumption 2.2 of \citet{dornemann2025ties} is violated but (\ref{eqn: g1A}) and Assumption G2-a hold true.  In this case, our results remain valid, whereas the results of \citet{dornemann2025ties} are not applicable. 


%
\begin{example} \label{Example: B}
Let $X_{ki}\sim \mathrm{Cauchy}(0,\sqrt{k})$ for all $i$ and odd $k$. For each even $k$, let $\mathbb{P}(X_{ki}=0)=1-k^{-1/4}$ and $\mathbb{P}(X_{ki}=1)=k^{-1/4}$ for all $i$. Then $\{X_{ki}\}$ are identically distributed across $i$ but not across $k$. Moreover,
\begin{eqnarray}
\limsup_{p\to\infty}\max_{1\le k\le p}\mathbb{P}(X_{k1}=0)
=\lim_{p\to\infty}(1-p^{-1/4})=1,
\end{eqnarray}
so Assumption~2.2 of \citet{dornemann2025ties} is violated. 
In this example, $\mathbb{V}{\rm{ar}}(Y_{k1})=1/3$ for odd $k$ and $k^{-1/4}(1-k^{-1/4})$ for even $k$. Hence the ESD of $\Sigma_p$ converges weakly to a distribution $b$ that assigns mass $0.5$ to each of $0$ and $1/3$. By Corollary~\ref{cor: IID}, the LSD of $\sqrt{n/p}\big(\mathbb{T}-D(\mathbb{T})\big)$ is given by $S_2 b$, where $S_2$ and $b$ are free. 
The first two rows of Table~\ref{tab:Table1aB} agree, showing that the theoretical moments of $S_2 b$ match well with the simulated moments for this example. In contrast, the last two rows of Table~\ref{tab:Table1aB} do not agree, indicating that the results of \cite{dornemann2025ties} are not applicable in this setting.

\begin{table}[h!]
\centering
\caption{Moment comparison for Example~\ref{Example: B} with $n=900$ and $p=30$.}
\label{tab:Table1aB}
\begin{tabular}{|c|c|c|c|c|c|c|}
\hline
 & $r=2$ & $r=3$ & $r=4$ & $r=5$ & $r=6$\\
\hline
Theoretical moment of $6 S_2 b$ & 1 & 0 & 4  & 0 &  20\\
\hline 
Empirical moment of ECDF of $3\sqrt{n/p}\big(\mathbb{T}-D(\mathbb{T})\big)$ & 1.08  & 0.005  & 3.94 & 0.03 & 20.105\\
\hline
Empirical moment of ECDF of $(3/2)\sqrt{n/p}\mathbf{T}$ where & 3.33 & 0.04 & 13.56 & 0.03 & 66.55 \\
 $\mathbf{T}$ is defined after the equation (6) in \citet{dornemann2025ties} &&&&& \\
 \hline
 Standard semi-circle moments & 1 & 0 & 2 & 0 & 5 \\
 \hline
\end{tabular}
\end{table}
\end{example}

\begin{remark} \label{rem: degenerate}
In contrast to \citet{dornemann2025ties}, our centered and scaled Kendall’s correlation matrix does not require normalization, and hence no asymptotic non-degeneracy condition is needed. Degeneracy of observations may produce zero entries but does not imply low rank or a degenerate LSD. The limiting behavior remains non-trivial provided the associated variance structure is non-degenerate.
\end{remark}

\begin{remark} \textbf{State of the art for IID observations}. 
Suppose $\{X_{ki}\}$ satisfy Assumption~1 and are identically distributed across $i$. The following summarizes the implications of our results and those of \citet{dornemann2025ties}.

\begin{enumerate}
\item \textbf{Continuous case.} If $\{X_{ki}\}$ are continuous random variables, then the LSD of 
$\sqrt{n/p}\big(\mathbb{T} - D(\mathbb{T})\big)$ and $\sqrt{n/p}\,\mathbf{T}$ in \citet{dornemann2025ties} coincide, and the common limit is $(2/3)S_2$. See Corollary~\ref{cor: IID1}.

\item \textbf{Assumption 2.2 of \citet{dornemann2025ties} holds, but our conditions fail.} If Assumption~2.2 of \citet{dornemann2025ties} holds, while \eqref{eqn: g1A} or Assumption~G2-a do not hold, then no result is available for the LSD of $\sqrt{n/p}\big(\mathbb{T} - D(\mathbb{T})\big)$. In contrast, the LSD of $\sqrt{n/p}\,\mathbf{T}$ is $(2/3)S_2$, as established in Theorem~2.5(1) of \citet{dornemann2025ties}. Moreover, this result can also be recovered via a minor modification of the proof of Theorem~\ref{thm: A}; see Remark~\ref{rem: heinypf}.

\item \textbf{Assumption 2.2 of \citet{dornemann2025ties} fails, but our conditions hold.} If Assumption~2.2 of \citet{dornemann2025ties} fails, but \eqref{eqn: g1A} and Assumption~G2-a hold, then the LSD of $\sqrt{n/p}\big(\mathbb{T} - D(\mathbb{T})\big)$ is characterized by Corollary~\ref{cor: IID}. In this setting, no corresponding result is available for $\sqrt{n/p}\,\mathbf{T}$ in \citet{dornemann2025ties}. See Example~\ref{Example: B} for further evidence.
\end{enumerate}
\end{remark}

\subsection{Semi-circle LSDs}
\noindent Note that the LSD in Theorem~\ref{thm: A} is model-specific and, in general, cannot be expressed in a unified form unless it reduces to the semicircle law. Even for i.i.d.\ observations, the limit need not be semicircular unless the random variable $a$ in Corollary~\ref{cor: IID} degenerates to a constant. In the following, we restrict our attention to classes of data matrices satisfying Assumption~3 (or Assumption~3A), for which the LSD of $\sqrt{n/p}(\mathbb{T}-D(\mathbb{T}))$ converges to the semicircle law.
\vskip 5pt
\noindent \textbf{Assumption 3}: There exist  $\gamma_1$, $\gamma_2$; a positive integer $n^{(0)}$; constants $C,\delta>0$ so that for all $1 \leq k_1, k_2 \leq p$ and for all $n >n^{(0)}$, we have 
\begin{eqnarray}
(n/p)\  |n^{-2}\sum_{i\in [n]}\tr(\mathbb{G}_{k_1,i})-\gamma_1|&<&Ck_1^{-(1+\delta)},\ \ \ 
(n/p)\  |n^{-3}\sum_{i\in [n]}\tr(\mathbb{G}_{k_1,i}\bG_{k_2,i})-\gamma_2| < C(\min(k_1,k_2))^{-(1+\delta)}. \ \ \
\end{eqnarray}
\begin{remark}
Suppose $\{X_{ki}\}$ satisfy Assumption~1 and are identically distributed across $i$. Then, by Remark~\ref{rem: GG}, Assumption~3 reduces to the following condition: there exist  constants $\tilde{\gamma}_{1}, C>0$ such that
\(
\tilde{\gamma}_1 - C(p/n)k^{-(1+\delta)}
\;<\;
\mathrm{Var}(Y_{k1})
\;<\;
\tilde{\gamma}_1 + C(p/n)k^{-(1+\delta)}
\quad \text{for all } k \ge 1 .
\)
\end{remark}

\begin{remark} Assumption 3 requires that the properly scaled version of $n^{-2}\sum_{i\in [n]}\tr(\mathbb{G}_{k_1,i})$ and $n^{-3}\sum_{i\in [n]}\tr(\mathbb{G}_{k_1,i}\mathbb{G}_{k_2,i})$ can be decomposed into a leading term, independent of $k_1$ and $k_2$, and a remainder term that depends on $k_1, k_2$ but decays sufficiently fast to be summable. This condition does not require identical distributions across both observations and components of observations; rather, it imposes mild aggregate regularity conditions that are sufficient to ensure a semi-circular limiting distribution.  Without such control, the asymptotic behavior reverts to that described in Theorem~\ref{thm: A}, which may yield a non-semicircular limiting distribution, as illustrated in Example~\ref{Example: A}.
\end{remark}

\begin{remark} \label{rem: 3G} Assumption 3 implies Assumptions G1 and G2 with $g_1 = \gamma_1$ and $g_{2\pi} = \gamma_2^{R}$ for all $\pi \in {\rm{NC}}_2(2R)$ and $R\geq 1$, since, with $a_n = \max(\sqrt{np},(n/p))$, we have
\begin{eqnarray}
a_n\Big| n^{-2}p^{-1}\sum_{k,i} \tr(\mathbb{G}_{k,i}) - \gamma_1\Big| &=& a_n\Big| p^{-1}\sum_{k}\big(n^{-2}\sum_{i} \tr(\mathbb{G}_{k,i}) -\gamma_1\big) \Big| \leq a_n p^{-1}\sum_{k}\Big|n^{-2}\sum_{i} \tr(\mathbb{G}_{k,i}) -\gamma_1\Big| \nonumber \\
& \lesssim & a_n (p/n) p^{-1}\sum_{k} k^{-(1+\delta)}  \to 0, \nonumber 
\end{eqnarray}
\begin{eqnarray}
&& \bigg|(n^{-R}p^{-1})\sum_{k_1,\ldots,k_{2R}}
\Big(
\prod_{(b,s)\in \pi}
\Big(
p^{-1}n^{-2}\sum_{i_b}\tr(\mathbb{G}_{k_b,i_b}\mathbb{G}_{k_{\sigma_{2R}(b)},i_b})
\,
\delta_{k_b,k_{\sigma_{2R}(s)}}
\delta_{k_s,k_{\sigma_{2R}(b)}}
\delta_{i_b,i_s}
\Big)
\Big) - \gamma_2^{R}\bigg|\nonumber \\
& = &   (n^{-R}p^{-1})\sum_{k_1,\ldots,k_{2R}}
\left(
\prod_{(b,s)\in \pi}
\Big(
p^{-1}n^{-2}\sum_{i_b}\tr(\mathbb{G}_{k_b,i_b}\mathbb{G}_{k_{\sigma_{2R}(b)},i_b}) - (n/p)\gamma_2 \Big)
\,
\delta_{k_b,k_{\sigma_{2R}(s)}}
\delta_{k_s,k_{\sigma_{2R}(b)}}
\delta_{i_b,i_s}
\right) \nonumber \\
& \lesssim & (n^{-R}p^{-1})\sum_{k_1,\ldots,k_{2R}} \prod_{(b,s)\in \pi}\min(k_b,k_{\sigma_{2R}(b)})^{-(1+\delta)}\delta_{k_b,k_{\sigma_{2R}(s)}}
\delta_{k_s,k_{\sigma_{2R}(b)}} \lesssim  (n^{-R}p^{-1})\sum_{k_1,\ldots,k_{2R}}  (\min\{k_j:\ 1 \leq j \leq r\})^{-(1+\delta)} \nonumber\\
& \lesssim & n^{-R}p^{R-1} \to 0. \nonumber 
\end{eqnarray}
\end{remark}
\noindent Assumption 3 can be relaxed further as follows  when $\{X_{ki}:\ 1 \leq i \leq n\}$ are identically distributed across $1 \leq k \leq p$.
\vskip 10pt
\noindent \textbf{Assumption 3A}: $\{X_{ki}:\ 1 \leq i \leq n\}$ are identically distributed across $1 \leq k \leq p$ and there are  numbers $\gamma_1$, $\gamma_2$ such that
\begin{eqnarray}
    n^{-(r+1)}\sum_{i\in [n]}\tr(\mathbb{G}_{k,i})^{r} \to \gamma_r\ \ \text{for all}\ r=1,2. \nonumber 
\end{eqnarray}
\begin{remark}
Suppose $\{X_{ki}\}$ are i.i.d.\ random variables and their common distribution does not depend on $n$. Then Assumption~3A is automatically satisfied.
\end{remark}
\noindent  We present examples in Section \ref{sec: example} with non-identical observations, to illustrate that Assumptions 1–3 (or Assumption 3A) are often satisfied.
The following theorem states about the LSD of appropriately centered and scaled version of $\bT - {\rm{D}}(\bT)$ under Assumption 3. Its proof is immediate from Theorem \ref{thm: A} and Remark \ref{rem: 3G}.  
\begin{theorem} \label{thm: 1}
Suppose Assumptions 1-3 hold, (or 3A holds instead of 3) and $p := p(n) \to \infty$, $p/n \to 0$ as $n \to \infty$. Then the  ESD of $\sqrt{\frac{n}{p}} (\mathbb{T}-D(\mathbb{T}))$ converges weakly almost surely to  $2S_{2\sqrt{\gamma_2}}$. 
\end{theorem}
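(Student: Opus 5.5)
The plan is to deduce Theorem~\ref{thm: 1} from Theorem~\ref{thm: A}. There are three steps: check Assumptions G1 and G2, identify the limit, and handle the alternative Assumption~3A separately.

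\textbf{Step 1 (Assumption 3).} Under Assumption~3, Remark~\ref{rem: 3G} gives G1 with $g_1=\gamma_1$ and G2 with $g_{2\pi}=\gamma_2^R$ for every $\pi\in{\rm NC}_2(2R)$. The bound there uses summability of $k^{-(1+\delta)}$, and the crude count $n^{-R}p^{R-1}\to 0$ uses $p/n\to 0$. Theorem~\ref{thm: A} then says the ESD of $\sqrt{n/p}(\mathbb{T}-D(\mathbb{T}))$ converges weakly almost surely. The limit is symmetric, and its $2R$-th moment is
\[
2^{2R}\sum_{\pi\in{\rm NC}_2(2R)}\gamma_2^R=2^{2R}\gamma_2^R\,{\rm Cat}_R ,
\]
where ${\rm Cat}_R=|{\rm NC}_2(2R)|$ is the Catalan number.

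\textbf{Step 2 (identify the limit).} The semicircle law $S_{r}$ of radius $r$ has variance $r^2/4$, so its $2R$-th moment is ${\rm Cat}_R(r^2/4)^R$. Taking $r=2\sqrt{\gamma_2}$ gives $\gamma_2^R{\rm Cat}_R$. Scaling by $2$ multiplies this by $2^{2R}$, which matches the moments in Step~1. The moments grow at most geometrically, so Carleman's condition gives uniqueness, and the limit is $2S_{2\sqrt{\gamma_2}}$.

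\textbf{Step 3 (Assumption 3A).} Here each row has the same distribution across $k$. Hence $n^{-2}\sum_i{\rm Tr}(\mathbb{G}_{k,i})$ and $n^{-3}\sum_i{\rm Tr}(\mathbb{G}_{k_1,i}\mathbb{G}_{k_2,i})$ do not depend on $k,k_1,k_2$. Write them as $c_{1,n}$ and $c_{2,n}$, so that $c_{r,n}\to\gamma_r$.

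For G2, the left side of (\ref{eqn: G2g}) then involves only $c_{2,n}$. Counting the free $k$-indices, which number exactly $R+1$ as in Remark~\ref{rem: GGG}, reduces it to $c_{2,n}^R\to\gamma_2^R$.

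G1 is the main obstacle. It demands the rate
\[
\max(\sqrt{np},n/p)\,(c_{1,n}-g_1)\to 0,
\]
which Assumption~3A does not supply directly. The first thing I would try is to avoid that rate by recentering at $c_{1,n}$ instead of a fixed $g_1$. The G1 rate appears in Theorem~\ref{thm: A} only through the centering $g_1\mathbf{I}_p$ of $G$; the ESD of $\mathbb{T}-D(\mathbb{T})$ involves no centering constant. So I would rerun the moment argument with $c_{1,n}$ in place of $g_1$, where the corresponding difference is identically zero. Equivalently, I would check that the diagonal of $G$ is absorbed when passing from $\mathbb{T}-D(\mathbb{T})$ to $G$. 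This should give G1 for free and hence the same limit $2S_{2\sqrt{\gamma_2}}$.

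If that does not go through cleanly, the fallback is to use the structure of Assumption~3A directly. For fixed $k$, ${\rm Tr}(\mathbb{G}_{k,i})=\sum_j{\rm Var}(Y_{k,i,j})$, and the diagonal entry $G_{kk}$ concentrates at rate $n^{-1/2}$ around $c_{1,n}$. That rate is negligible after multiplying by $\sqrt{n/p}$, because it amounts to an $o(1)$ uniform shift of the spectrum.
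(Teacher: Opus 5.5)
Your Steps 1 and 2 are exactly the paper's argument. The paper proves Theorem~\ref{thm: 1} in one line from Theorem~\ref{thm: A} and Remark~\ref{rem: 3G}. Your identification of $2^{2R}\gamma_2^R\,\mathrm{Cat}_R$ as the $2R$-th moment of $2S_{2\sqrt{\gamma_2}}$ is correct, since $S_r$ has $2R$-th moment $\mathrm{Cat}_R\,(r/2)^{2R}$.

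Step 3 goes beyond the paper. Remark~\ref{rem: 3G} covers only Assumption~3, and the paper asserts the 3A case with the same one-line justification. As you note, 3A supplies no rate for G1. Nor does 3A imply 3: for row-homogeneous data, Assumption~3 with $k_1=p$ forces $|c_{1,n}-\gamma_1|\lesssim p^{-\delta}/n$. The paper's bound on the term $D$ in Section~\ref{sec: hoeffneg} also explicitly invokes Assumption~3.

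Your recentering is the right repair, and it goes through.
\begin{itemize}
\item Because $\mathbb{T}-D(\mathbb{T})=2(\mathbb{G}-D(\mathbb{G}))-2\mathbb{F}+\mathbb{H}$, the constant in \eqref{eq: hoeffding} is arbitrary.
\item Take $g_{1,n}=\frac{n}{n-1}c_{1,n}=\mathbb{E}(G_{kk})$, which under 3A is the same for every $k$. Then $\mathbb{G}^0=g_{1,n}\mathbf{I}_p$, so the term $D$ vanishes identically. With your choice $c_{1,n}$ it is $O(1/(np))$, which is also fine.
\item The diagonal-type matchings $(k_l,i_l)=(k_{\sigma_R(l)},i_l)$ in the proof of Lemma~\ref{lem: lem2} now contribute nothing, because the diagonal entries $G_{kk}-g_{1,n}$ are centered summand by summand in $i$.
\item In (c2), $g_1$ enters only as a bounded constant, so its $n$-dependence is harmless.
\end{itemize}

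Your G2 check is also right. Under 3A each factor in \eqref{eqn: G2g} equals $(n/p)c_{2,n}$, and the constrained $k$-tuples number exactly $p^{R+1}$, so the left side is $c_{2,n}^R\to\gamma_2^R$.

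Your fallback is really the paper's bound on the term $C$ combined with Lemma~\ref{lem: bai}. The matrix $\sqrt{n/p}\,(D(\mathbb{G})-\mathbb{G}^0)$ is a diagonal perturbation controlled in Frobenius norm, not a uniform shift. Only $\sqrt{n/p}\,(\mathbb{G}^0-c_{1,n}\mathbf{I}_p)=O((np)^{-1/2})\,\mathbf{I}_p$ is a genuine scalar shift.
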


\subsection{Examples}  \label{sec: example}
\noindent In this section, we present three examples that are not addressed in the existing literature. In these examples, the $X_{ki}$’s are identically distributed across $k$ but not across $i$. 
Examples \ref{example: 1} considers heavy-tailed discrete observations, while Example \ref{example: 4} involves a mixture of both heavy-tailed  continuous and discrete observations. Across all these examples, Assumptions 1, 2 and 3A hold, ensuring that the conclusion of Theorem \ref{thm: 1} applies.

\begin{example} \label{example: 1}
Let $X_{ki}$ be discrete random variables that are independent across $1 \leq i \leq n$ and independently and identically distributed across $1 \leq k \leq p$. If $i$ is even, the probability mass function is 
$\bP(X_{ik}=z)=\frac{45}{\pi^{4}z^4}$ for all $z = \pm 1, \pm 2,\ldots$.
For odd $i$, $X_{ki}$ is a scaled Rademacher random variable i.e. $\bP(X_{ik} = \pm 2) = 0.5$. Also assume that $n,p \to \infty$ and $p/n \to 0$. It can be easily seen that Assumption 1 is satisfied. We will now argue that this model satisfies Assumptions 2 and 3A. \vskip 3pt
\noindent 
\textit{Verification of Assumption 2}: Suppose both $i$ and $j$ are of same parity. Then
\begin{eqnarray}
    \bP(X_{kj} < X_{ki})&=& \sum_{x_{ki}}\bP(X_{kj}<X_{ki}|X_{ki}=x_{ki})\bP(X_{ki}=x_{ki}) =\sum_{x_{ki}}\bP(X_{kj}<x_{ki})\bP(X_{ki}=x_{ki})\nonumber\\
    &=&\sum_{x_{ki}}\bP(X_{kj} > -x_{ki})\bP(X_{ki}=-x_{ki})\label{eq:ex1.1} =\sum_{y_{ki}}\bP(X_{kj}>y_{ki})\bP(X_{ki}=y_{ki}) =\bP(X_{kj}>X_{ki}).
    \end{eqnarray}
Note that (\ref{eq:ex1.1}) uses the symmetric property of the distribution under consideration.  Next, suppose $i$ is even and $j$ is odd. Then 
\begin{eqnarray}
    \bP(X_{kj}<X_{ki})&=&\sum_{x=1}^{\infty}\bP(X_{kj}<x)\bP(X_{ki}=x) = \frac{1}{2}\bigg(2\frac{45}{\pi^4}+\frac{45}{16\pi^4 }\bigg)+\bigg(
    \frac{1}{2} - \frac{45}{\pi^2}-\frac{45}{16\pi^4 }\bigg) =\frac{1}{2}-\frac{1}{2}\frac{45}{16\pi^4}, \nonumber \\
    \bP(X_{kj}>X_{ki})&=&1-\bP(X_{kj}\leq X_{ki})=1-\sum_{x=-1}^{\infty}\bP(X_{kj} \leq x)\bP(X_{ki}=x) = 1-\frac{1}{2}\bigg(\frac{45}{16\pi^4} +  2\frac{45}{\pi^4}\bigg)- \bigg(\frac{1}{2} - \frac{45}{\pi^4}\bigg) = \frac{1}{2}-\frac{1}{2}\frac{45}{16\pi^4}. \nonumber 
\end{eqnarray}
The case where $i$ is odd and $j$ is even can be treated similarly.
This completes the proof that Assumption 2 holds.\vskip 3pt
\noindent \textit{Verification of Assumption 3A}:  To this end, $\bG_{k,i}$ needs to be evaluated. Since $X_{ki}$ are identically distributed across $k$, $\bG_{k,i}$ are same across $k$ for all $1 \leq k \leq p$. Denote $\bG_{k,i}$ by $\bG^{(1,1)}$ and $\bG^{(1,2)}$ for even and odd $i$ respectively. Numerical values of entries of these matrices can be computed using the R script available at:  
\url{https://github.com/Rss-1313/Simulations/blob/main/Discrete_Discrete_calculation.R}.
Final results are as follows: 
\begin{eqnarray}
\bG^{(1,1)}(j_1,j_2) = \begin{cases} 0.03275,\ \ \text{if $j_1$, $j_2$ both are odd}, \\
0.4566,\ \ \text{if $j_1$, $j_2$ are of opposite parity}, \\
0.2675,\ \ \text{if $j_1$, $j_2$ both are even}, 
\end{cases}  \ \ \ \bG^{(1,2)}(j_1,j_2) = \begin{cases} 0.25,\ \ \text{if $j_1$, $j_2$ both are odd}, \\
0.4764,\ \ \text{if $j_1$, $j_2$ are of opposite parity}, \\
0.9078,\ \ \text{if $j_1$, $j_2$ both are even}.
\end{cases}   \nonumber 
\end{eqnarray}
Clearly,
\begin{eqnarray}
n^{-2}\sum_{i=1}^n \tr(\bG_{k,i}) &\to & (0.03275+0.2675+0.25+0.9078)/4 = 0.3645 \ \nonumber \\
 \text{and}\ \ n^{-3}\sum_{i=1}^n \tr(\bG_{k,i}^2) &\to & ((0.03275)^2+ 2(0.4566)^2+(0.2675)^2+ (0.25)^2 + 2(0.4764)^2+(0.9078)^2)/8 = 0.242. \nonumber 
\end{eqnarray}
Therefore, the model in Example \ref{example: 1} satisfies Assumption 3A with $\gamma_1 = 0.3645$ and $\gamma_2 = 0.242$. \vskip 3pt

Consequently, since $2\sqrt{0.242} = 0.984$, Theorem~\ref{thm: 1} implies that the ESD of $\sqrt{n/p}\,(\bT - \mathbb{D}(\bT))$ converges weakly, almost surely, to $2S_{0.984}$. Equivalently, the ESD of $\sqrt{n/p}\,(\bT - \mathbb{D}(\bT))/0.984$ converges weakly, almost surely, to $S_{2}$.

The simulation results reported in the third row of Table~\ref{tab:Table1a} show that the sample moments of the ESD of $\sqrt{n/p}\,(\bT - \mathbb{D}(\bT))/0.984$ closely match those of $S_{2}$, as given in the second row of the table. In contrast, the discrepancy observed in the fourth row of Table~\ref{tab:Table1a} indicates that the results of \citet{dornemann2025ties} do not apply to this example.

\begin{table}[h!]
  \begin{center}
    \caption{Moment comparison for Examples \ref{example: 1} - \ref{example: 4}  with $n=4900$ and $p=70$. \\ Let $\omega_1$ and $\omega_2$ be respectively the ESD of  $\sqrt{n/p}(\mathbb{T-D(T)})/2\sqrt{\gamma_2}$ and $\mathcal{W}_2 = (3/2)\sqrt{n/p}\mathbf{T}$ considered in Theorem 2.5 (1) of \citet{dornemann2025ties}}
    \label{tab:Table1a}
    \begin{tabular}{|c|c|c|c|c|c|c|c|} 
         \hline 
       The $r$-th order moment  & $r=2$ & $r=3$ & $r=4$ & $r=5$ & $r=6$ & $r=7$ & $r=8$\\
        
      \hline
      $S_{2}$  & 1& 0 & 2 & 0 & 5 & 0 & 14\\
      \hline
      Example \ref{example: 1}: $\omega_1$   & 0.97  & 7 $\times 10^{-9}$   & 1.96 & 13 $\times 10^{-7}$ & 5.07 & 0.0001 & 13.93
      \\
      \hline
      Example \ref{example: 1}: $\omega_2$   & 3.82  & 0.0004  & 40.44 & 0.0021 & 312.22 & 0.02 & 3670.06\\
      \hline
       Example \ref{example: 2}: $\omega_1$  & 1.02  & 12 $\times 10^{-10}$   & 2.01 & 8 $\times 10^{-6}$ & 4.96 & 0.0003 & 13.89\\
      \hline
       Example \ref{example: 2}: $\omega_2$  & 1.37  & 0.0006  & 3.64 & 0.0013 & 12.19 & 0.004 & 45.88\\
      \hline
       Example \ref{example: 4}: $\omega_1$  & 0.98  & 8 $\times 10^{-10}$   & 1.97 & 11 $\times 10^{-6}$ & 5.03 & 0.0001 & 14.06\\
      \hline
       Example \ref{example: 4}: $\omega_2$  & 2.93  & 0.0003  & 12.76 & 0.0009 & 71.19 & 0.007 & 224.77\\
      \hline
    \end{tabular}
  \end{center}
\end{table}
\end{example}

\begin{example} \label{example: 2}
One of the most commonly used distributions in statistics is the normal distribution. This example considers observations arising from normal distributions. Suppose $X_{ki}$ follows $N(5,\sigma_i^2)$ where $\sigma_i= i$. Note that observations are   independent across $i$ and independently and identically distributed across $k$. It is easy to see that Assumption 1 holds. Assumption 2 holds because of the symmetric nature of the normal distribution.  Next we shall discuss Assumption 3A for this model.  It is easy to see that, for all $i,j_1,j_2$, we have 
$$\text{$\bG_{k,i}(j_1,j_2)=\frac{2}{\pi}\sin^{-1}{\rho_{i,j_1,j_2}}$ where $\rho_{i,j_1,j_2} =\frac{\sigma^{2}_i}{(\sigma_i^2+\sigma^2_{j_1})^{0.5}(\sigma^2_i+\sigma^2_{j_2})^{0.5}}$.}$$ 
Since it is analytically difficult to evaluate $\tr(\bG_{k,i}^r)$ for $r=1,2$; we resorted to numerical analysis. Using this analysis, an estimate of $\gamma_1$ and $\gamma_2$ are obtained, say $\hat{\gamma_1}$ and  $\hat{\gamma_2}$. The R program to evaluate this is given in the 
link \url{https://github.com/Rss-1313/LSDp-n-to-0/blob/main/Gkinormal900.R}. A value of  $\hat{\gamma_1}$ and  $\hat{\gamma_2}$ are thus obtained $0.3783$ and $0.1622466$ respectively. 

\vskip 3pt

Consequently, since $2\sqrt{0.1622466} = 0.81$, Theorem~\ref{thm: 1} implies that the ESD of $\sqrt{n/p}\,(\bT - \mathbb{D}(\bT))$ converges weakly, almost surely, to $2S_{0.81}$. Equivalently, the ESD of $\sqrt{n/p}\,(\bT - \mathbb{D}(\bT))/0.81$ converges weakly, almost surely, to $S_{2}$.

The simulation results reported in the fifth row of Table~\ref{tab:Table1a} show that the sample moments of the ESD of $\sqrt{n/p}\,(\bT - \mathbb{D}(\bT))/0.81$ closely match those of $S_{2}$, as given in the second row of the table. In contrast, the discrepancy observed in the sixth row of Table~\ref{tab:Table1a} indicates that the results of \citet{dornemann2025ties} do not apply to this example.

%
\end{example}

\begin{example} \label{example: 4}
This section focuses on observations arising from both discrete and continuous distributions. For the sake of this example, let $X_{ki} \sim t(1)$ ( i.e $t$-distribution with $1$ degrees of freedom) for even $i$  and $\bP(X_{ki}= \pm 1)=0.5$ for odd $i$. It can be easily seen that observations are independent across both $k$ and $i$. It is therefore evident that this setup satisfies Assumption 1. Note that observations are not identically distributed across $i$. Now we shall discuss whether this model satisfies Assumptions 2 and 3A. 
\vskip 3pt
\noindent \textit{Verification of Assumption 2}: $\bP(X_{ki}<X_{kj}) =\bP(X_{ki}>X_{kj})$  holds trivially for even $i$ and $j$ since $X_{ki}$ and $X_{kj}$ are independently and identically distributed and continuous random variables. Next, suppose $i$ and $j$ are both odd, then 
\begin{eqnarray}
\bP(X_{ki}<X_{kj}) &=& 1-\bP(X_{kj} \leq X_{ki}) = 1- \sum_{x_{ki}=-1,1}\bP(X_{kj} \leq x_{ki})\bP(X_{ki} = x_{ki}) = 1- \frac{1}{2}\frac{1}{2}-\frac{1}{2} = \frac{1}{4}, \nonumber \\
\bP(X_{ki} > X_{kj}) &=& \bP(X_{kj} < X_{ki}) =   \sum_{x_{ki}=-1,1}\bP(X_{kj} < x_{ki})\bP(X_{ki} = x_{ki}) =  \frac{1}{2}\frac{1}{2} = \frac{1}{4}. \nonumber 
\end{eqnarray}
Now, suppose $i$ is odd and $j$ is even. Then 
\begin{eqnarray}
\bP(X_{kj} >X_{ki}) &=& 1-\bP(X_{kj} \leq X_{ki}) = 1- \sum_{x_{ki}=-1,1}\bP(X_{kj} \leq x_{ki})\bP(X_{ki} = x_{ki}) = 1-  \frac{1}{4}\frac{1}{2}-\frac{3}{4}\frac{1}{2} = \frac{1}{2}, \nonumber \\
\bP(X_{kj} < X_{ki}) &=&   \sum_{x_{ki}=-1,1}\bP(X_{kj} < x_{ki})\bP(X_{ki} = x_{ki}) =  \frac{1}{4}\frac{1}{2}+\frac{3}{4}\frac{1}{2}=\frac{1}{2}. \nonumber 
\end{eqnarray}
The case where $i$ is even and $j$ is odd can be handled similarly.
This completes the proof that Assumption 2 holds for the model in Example \ref{example: 4}.
\vskip 3pt
\noindent \textit{Verification of Assumption 3A}: To verify Assumption 3A, it is essential to establish convergence of  $n^{-s-1}\sum_{i}\tr(\bG_{k,i}^s)$ for  $s=1,2$. The algorithm for establishing the said convergence is the same as the one followed for Example \ref{example: 1} and hence the details are omitted here.  The link of R script for this computation is \url{https://github.com/Rss-1313/LSDp-n-to-0/blob/main/discontevalpn0.R} and it proves that Assumption 3A holds for Example \ref{example: 4} with $\gamma_1 = 0.3343$ and $\gamma_2 = 0.1115451$. \vskip 3pt

Consequently, since $2\sqrt{\gamma_2} = 0.668$, Theorem~\ref{thm: 1} implies that the ESD of $\sqrt{n/p}\,(\bT - \mathbb{D}(\bT))$ converges weakly, almost surely, to $2S_{0.668}$. Equivalently, the ESD of $\sqrt{n/p}\,(\bT - \mathbb{D}(\bT))/0.668$ converges weakly, almost surely, to $S_{2}$.

The simulation results reported in the seventh row of Table~\ref{tab:Table1a} show that the sample moments of the ESD of $\sqrt{n/p}\,(\bT - \mathbb{D}(\bT))/0.668$ closely match those of $S_{2}$, as given in the second row of the table. In contrast, the discrepancy observed in the last row of Table~\ref{tab:Table1a} indicates that the results of \citet{dornemann2025ties} do not apply to this example.

\end{example}

\begin{remark}
While \citet{kendep2023} establishes the LSD of Kendall’s correlation matrices under cross-component dependence in the i.i.d.\ continuous setting (recovering \citet{ECP} as a special case), and \citet{dornemann2025ties} treats the i.i.d.\ discrete case under independence, our work further extends \citet{dornemann2025ties} to non-identically distributed observations and certain discrete/mixed i.i.d.\ settings beyond their assumptions. Both \citet{dornemann2025ties} and this paper rely crucially on independence across components. Extending our results to dependent components would require substantially different techniques, particularly in the presence of distributional heterogeneity, and is therefore left for future research.
\end{remark}

\begin{remark}
The asymptotic regimes $p/n \to 0$, $p/n \to \theta \in (0,\infty)$, and $p/n \to \infty$ typically lead to fundamentally different limiting spectral behaviors in random matrix theory and cannot, in general, be treated within a unified framework. The present paper focuses on the regime $p/n \to 0$; the proportional and ultra-high dimensional cases require substantially different techniques, particularly in the presence of heterogeneity, and are left for future investigation.
\end{remark}

\subsection{Data-driven ad-hoc verification of assumptions via clustering} \label{sec: adhoc}

In general, when the entries of the data matrix are all drawn from distinct distributions, it is not feasible to directly verify the structural assumptions imposed in our theoretical framework. However, a practically relevant and tractable setting arises when the entries can be grouped into some clusters, where observations within each cluster share the same underlying distribution. This allows us to reduce the complexity of the problem by working with a smaller number of distributional components.

More precisely, suppose that the index set $\{(k,i): 1 \leq k \leq p,\, 1 \leq i \leq n\}$ can be partitioned into $L_n$ clusters $\Omega_1, \Omega_2, \ldots, \Omega_{L_n}$ such that all entries in a given cluster are identically distributed. We allow the cluster sizes to vary, but assume that each cluster is sufficiently large to permit reliable estimation of the corresponding distribution function, while the total number of clusters $L_n$ is growing but small relative to $np$.

In practice, the true clustering structure is unknown. Therefore, we adopt a data-driven approach in which we first apply a suitable clustering algorithm to partition the indices into estimated clusters $\hat{\Omega}_1, \hat{\Omega}_2, \ldots, \hat{\Omega}_{L_n}$. These estimated clusters serve as proxies for the latent groups of identically distributed entries.

For each estimated cluster $\hat{\Omega}_u$, we define the empirical distribution function
\begin{eqnarray}
\hat{F}_u (t) = \frac{1}{|\hat{\Omega}_u|} \sum_{(k,i) \in \hat{\Omega}_u} \bI(X_{ki}\leq t), \nonumber 
\end{eqnarray}
where $\bI(\cdot)$ is the indicator function and which provides an estimate of the common distribution function within that cluster. We note that this estimator is subject to two sources of error: (i) misclassification error arising from inaccuracies in the clustering step, and (ii) sampling error inherent in empirical distribution estimation.

Using these empirical distribution functions, we propose to assess the validity of Assumption~2 through a symmetry condition across clusters. Specifically, for any pair of clusters $u \neq v$, we consider the quantities
\begin{eqnarray}
A_{u,v} = \frac{1}{|\hat{\Omega}_v|} \sum_{(k,i) \in \hat{\Omega}_v}  
\big(\hat{F}_{u}(X_{ki}) + \bar{\hat{F}}_{u}(X_{ki}) -1\big), \ \ \ A_{v,u} = \frac{1}{|\hat{\Omega}_u|} \sum_{(k,i) \in \hat{\Omega}_u}  
\big(\hat{F}_{v}(X_{ki}) + \bar{\hat{F}}_{v}(X_{ki}) -1\big). \nonumber
\end{eqnarray}
If $A_{u,v} \approx A_{v,u}$ for all pairs $u \neq v$, this provides empirical evidence supporting Assumption~2.

Next, we turn to the estimation of the covariance structure $\mathbb{G}_{k,i}$. For each $(k,i)$, let $\tilde{\Omega}_{ki}$ denote the estimated cluster associated with that entry, that is, $\tilde{\Omega}_{ki} = \hat{\Omega}_u$ if $(k,i) \in {\Omega}_u$. We then define the estimator
\begin{eqnarray}
\hat{\mathbb{G}}_{k,i}(j_1,j_2) 
&=& \frac{1}{|\tilde{\Omega}_{ki}|} \sum_{(k^\prime,i^\prime) \in \tilde{\Omega}_{ki}} 
\big(\hat{F}_{\tilde{\Omega}_{k j_1}}(X_{k^\prime i^\prime}) + \bar{\hat{F}}_{\tilde{\Omega}_{k j_1}}(X_{k^\prime i^\prime}) -1\big) 
\big(\hat{F}_{\tilde{\Omega}_{k j_2}}(X_{k^\prime i^\prime}) + \bar{\hat{F}}_{\tilde{\Omega}_{k j_2}}(X_{k^\prime i^\prime}) -1\big). \nonumber
\end{eqnarray}
This estimator is obtained by pooling information within the estimated cluster $\tilde{\Omega}_{ki}$ and substituting the unknown distribution functions with their empirical counterparts.

Once $\hat{\mathbb{G}}_{k,i}$ is constructed, we can empirically evaluate Assumptions G1, G2, and 3 (or 3A) by replacing $\mathbb{G}_{k,i}$ with $\hat{\mathbb{G}}_{k,i}$ in the corresponding expressions. In this way, the theoretical conditions can be translated into computable quantities based solely on the observed data.

We emphasize that the above procedure is heuristic in nature and is intended primarily as a practical guideline for model assessment. A rigorous justification would require precise control of both clustering errors and the estimation errors of the empirical distribution functions, together with a quantitative analysis of their impact on the verification of the assumptions.

More importantly, the present work should be viewed as a first step towards developing a theoretical framework for handling non-identically distributed observations in moderate high-dimensional settings. In the absence of results such as Theorems~\ref{thm: A} and \ref{thm: 1}, it is difficult to design principled and theoretically justified procedures for testing independence under heterogeneous distributions.
This is analogous to the setting of continuous i.i.d.\ observations, where limit theorems for the spectral distribution of Kendall's correlation matrix, established in \citet{ECP}, have been instrumental in enabling the development of rigorous independence tests based on linear spectral statistics in the proportional high-dimensional regime; see \citet{li2021central}.


In contrast, the non-identically distributed setting presents significantly greater challenges, and the theoretical results developed in this paper provide a necessary foundation for future work in this direction. The independence testing framework presented in the next section is therefore exploratory, and is included mainly to illustrate that ignoring heterogeneity in distributions may lead to spurious detection of dependence. Developing a fully rigorous testing methodology that accounts for these complexities remains an important direction for future research.

\section{Test of Independence} \label{sec: application}

Existing tests of independence for heavy-tailed data typically assume that the observations admit a density with respect to the Lebesgue measure on $\mathbb{R}$ and that the columns of a $p \times n$ data matrix are identically distributed, with the dimensional ratio satisfying $p/n \to \theta \in (0, \infty)$. Recently, \citet{dornemann2025ties} studied the LSD of Kendall’s correlation matrix under independent and identically distributed observations, allowing for both discrete and continuous data in the regime $p/n \to \theta \in [0,\infty)$. Although not explicitly discussed there, their results can also be used to construct tests of independence within the i.i.d.\ framework.

In contrast, Theorems~\ref{thm: A} and \ref{thm: 1} provide a framework for testing row independence that remains applicable even when the observations are discrete and the column distributions are not identical, under the regime $p/n \to 0$. It is important to note, however, that the limiting spectral distribution of $\sqrt{n/p}(\bT - \mathbb{D}(\bT))$ in Theorems~\ref{thm: A} and \ref{thm: 1} is, in general, not distribution-free. Consequently, the testing procedures described below are also not distribution-free.

As illustrated below (see Table~\ref{tab:Table6}), failing to account for non-identically distributed observations may lead to spurious detection of dependence. The discussion in this section is intentionally exploratory and based on an ad-hoc procedure, aimed at highlighting the importance of properly addressing heterogeneity in distributions when testing for independence. A rigorous treatment, including the construction of appropriate test statistics (for example, based on linear spectral statistics) and a systematic theoretical analysis, requires substantial further development and is beyond the scope of the present work.

\vskip 5pt
\noindent
Let $\tilde{\bZ} = \{Z_{ki}: 1 \leq k \leq p,\; 1 \leq i \leq n\}$ and $\tilde{\bX} = \{X_{ki}: 1 \leq k \leq p,\; 1 \leq i \leq n\}$ be two independent and identically distributed data matrices, each with independent entries satisfying Assumptions 1, 2, G1 and G2 (or Assumption 3 or 3A). Consider a $p \times p$ matrix $\bA$ whose $(i,j)$-th entry is $\alpha^{\,i-j+1}$ for $i > j$ and for some $\alpha > 0$, and is zero otherwise. We observe the data matrix
\[
\bZ = \tilde{\bX} + \bA \tilde{\bZ}.
\]
Our goal is to test the null hypothesis $H_0: \bA = 0$ against the alternative $H_1: \bA \neq 0$.

\vskip 5pt
\noindent
\textbf{Graphical testing procedure.} The proposed graphical test proceeds as follows:
\begin{enumerate}
\item Compute the Kendall's correlation matrix $\bT_{\bZ}$ from the observed data matrix $\bZ$.
\item Simulate an independent reference matrix $\tilde{\bX}$.
\item Compute the Kendall's correlation matrix $\bT_{\tilde{\bX}}$.
\item Plot the distribution function of ESDs of 
\(
\sqrt{\frac{n}{p}}\big(\bT_{\bZ} - \mathbb{D}(\bT_{\bZ})\big)
\ \ \text{and} \ \ 
\sqrt{\frac{n}{p}}\big(\bT_{\tilde{\bX}} - \mathbb{D}(\bT_{\tilde{\bX}})\big)
\)
on the same graph.
\item If the two ESDs are close, then do not reject $H_0$; otherwise, reject $H_0$.
\end{enumerate}

\noindent Figure~\ref{fig: 5}  illustrates the performance of this procedure. The first row corresponds to the null hypothesis, where the empirical distributions coincide, while the remaining rows correspond to alternatives, where clear deviations are observed.

\noindent
\textbf{Empirical size.} To evaluate the empirical size of the test, we proceed as follows:
\begin{enumerate}
\item Simulate $\bZ$ under $H_0$ (i.e., $\bA = 0$) and compute $\bT_{\bZ}$.
\item Simulate $\tilde{\bX}$ and compute $\bT_{\tilde{\bX}}$.
\item Compute the Kolmogorov distance between the ESDs of 
\(
\sqrt{\frac{n}{p}}\big(\bT_{\bZ} - \mathbb{D}(\bT_{\bZ})\big)
\quad \text{and} \quad
\sqrt{\frac{n}{p}}\big(\bT_{\tilde{\bX}} - \mathbb{D}(\bT_{\tilde{\bX}})\big).
\)
\item Repeat steps 1--3 independently 2000 times and record the distances.
\item Define the cutoff as the $95^{\text{th}}$ percentile of these distances.
\item Repeat steps 1--3 another 1000 times and compute the proportion of distances exceeding the cutoff. This gives the empirical size.
\end{enumerate}

\noindent
\textbf{Empirical power.} To evaluate the empirical power:
\begin{enumerate}
\item Simulate $\bZ$ under $H_1$ (i.e., $\bA \neq 0$) and compute $\bT_{\bZ}$.
\item Simulate $\tilde{\bX}$ and compute $\bT_{\tilde{\bX}}$.
\item Compute the Kolmogorov distance between the corresponding ESDs.
\item Repeat the above steps 2000 times.
\item Compute the proportion of distances exceeding the cutoff obtained from the size analysis. This gives the empirical power.
\end{enumerate}

\noindent Table~\ref{tab:Table6}  demonstrates that the proposed test exhibits good empirical size control and satisfactory power.

\vskip 5pt
\noindent
\textbf{Practical considerations.} In practice, the distribution of $\tilde{\bX}$ is unknown and cannot be directly simulated. To address this, one may first apply a clustering algorithm to group entries of the data matrix into clusters with approximately identical distributions. The distribution function within each cluster can then be estimated empirically, and a synthetic reference matrix can be generated using independent entries drawn from these estimated distributions. Under $H_0$, such a surrogate matrix is expected to mimic the behavior of $\tilde{\bX}$, up to errors arising from misclassification and distributional estimation. Quantifying this approximation error remains an important open problem.

\noindent
\textbf{Limitations.} The performance of the proposed test depends on the signal strength. In particular, when the variability of $\tilde{\bX}$ dominates that of $\bA\tilde{\bZ}$, the resulting Kendall's correlation matrix of $\bZ$ may remain close to that of $\tilde{\bX}$, leading to low power. A precise characterization of the minimal detectable signal strength is beyond the scope of this work and will be addressed in future research.

\noindent
\textbf{Conclusion.} The primary objective of this section is to highlight that ignoring heterogeneity in the distributions of the data matrix can lead to misleading conclusions, in particular, spurious detection of dependence. Properly accounting for non-identically distributed observations is therefore crucial in high-dimensional independence testing. 

For comparison, one may also construct a similar independence test based on Theorem 2.5 (1) of \citet{dornemann2025ties}, and we report its empirical performance in Table~\ref{tab:Table6}. The results indicate that this alternative approach tends to reject the null hypothesis more frequently even when it is true, suggesting potential size distortion under heterogeneous distributions. In addition, its empirical power is generally lower than that of the proposed method.

\begin{table}[h!]
\begin{center}
\caption{Empirical size and power of the proposed test and the test based on Theorem 2.5 (1) of \citep{dornemann2025ties}, for different values of $(n,p)$ and $\alpha$, based on 2000  replications. For each value of $\alpha$, two columns are reported: the first corresponds to the proposed test, and the second corresponds to the test derived from \citep{dornemann2025ties}. The observations are independent and distributed as follows: for even $i$, $X_{ki}$ has a symmetric heavy-tailed distribution given by $\mathbb{P}(X_{ki} = z) = \mathbb{P}(X_{ki} = -z) = \frac{45}{\pi^4 z^4}$ for $z = \pm1, \pm2, \ldots$; for odd $i$, $\mathbb{P}(X_{ki} = \pm2) = 0.3$ and $\mathbb{P}(X_{ki} = \pm1) = 0.2$.}
\label{tab:Table6}
\small
\begin{tabular}{|c|cc|cc|cc|}
\hline
 & \multicolumn{2}{c|}{$\alpha=0$} & \multicolumn{2}{c|}{$\alpha=1$} & \multicolumn{2}{c|}{$\alpha=2$} \\
\hline
$(n,p)$ & Ours & \citep{dornemann2025ties} & Ours & \citep{dornemann2025ties} & Ours & \citep{dornemann2025ties} \\
\hline
$(49, 7)$   & 0.06  &  0.56 & 0.808 & 0.764 & 0.862 & 0.72 \\
\hline
$(100, 10)$ & 0.058 & 0.72 & 0.992 & 0.81 & 0.994 & 0.78 \\
\hline
$(900, 30)$ & 0.054 &  0.77 & 1 & 0.812    &  1 &  0.88         \\
\hline
\end{tabular}
\end{center}
\end{table}

\begin{figure}[ht] \vspace{0 cm}
\includegraphics[height=60mm, width = 50 mm,trim={0 0 0 20mm},clip]{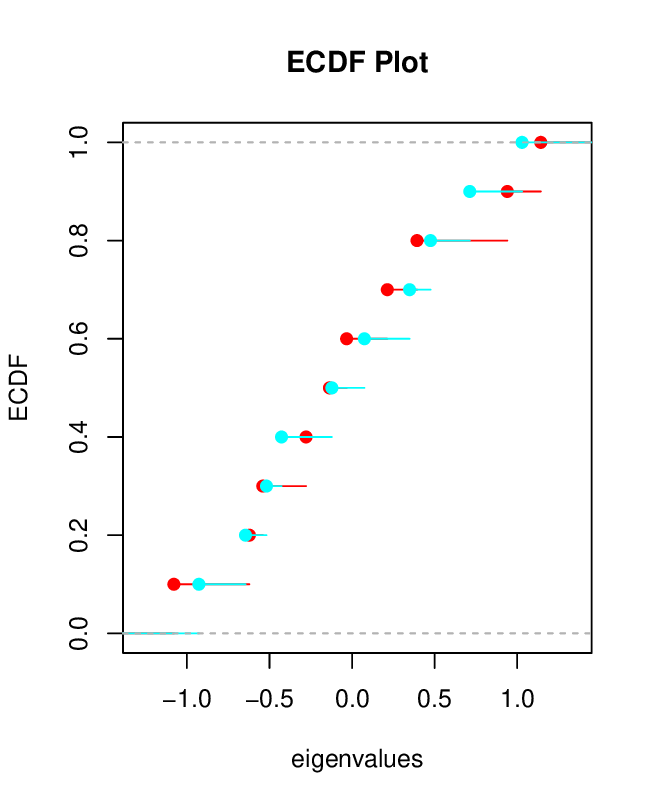} \hspace{0cm}
\includegraphics[height=60mm, width = 50 mm,trim={0 0 0 20mm},clip]{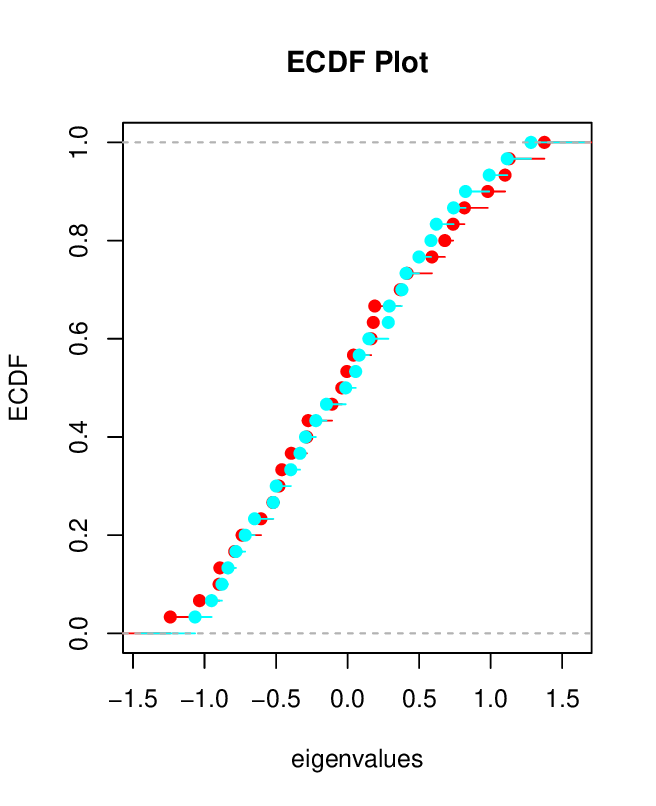}\hspace{0cm}
\includegraphics[height=60mm, width = 50 mm,trim={0 0 0 20mm},clip]{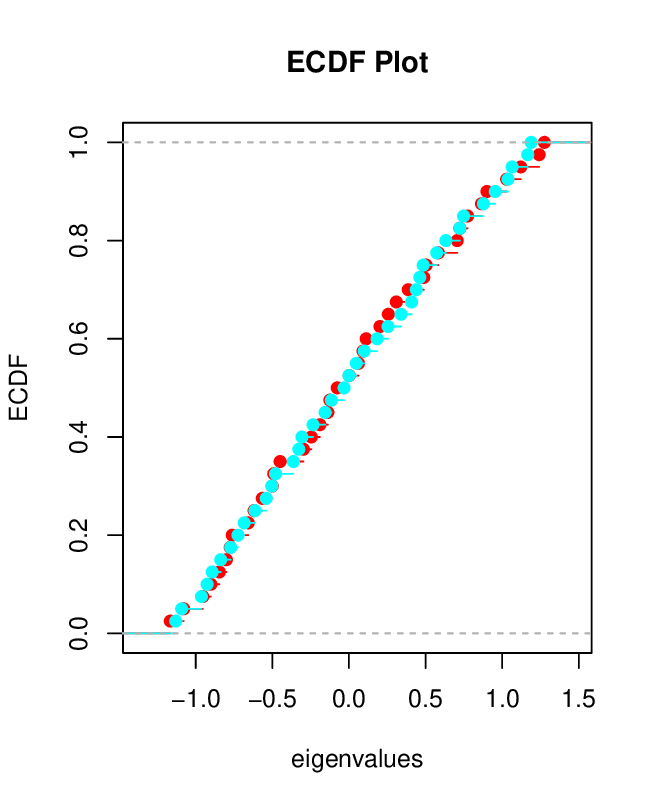} \vspace{0cm}
\includegraphics[height=60mm, width = 50 mm,trim={0 0 0 20mm},clip]{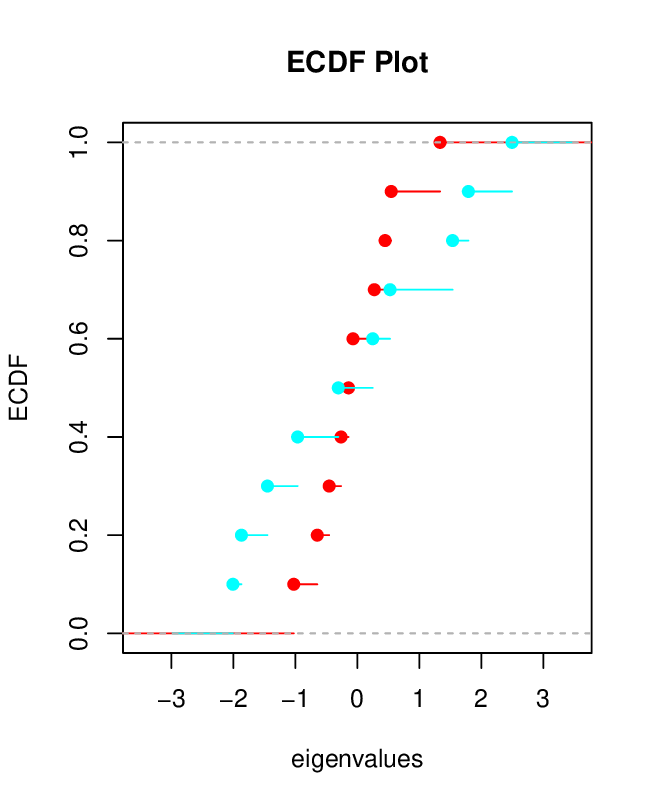} \hspace{0cm}
\includegraphics[height=60mm, width = 50 mm,trim={0 0 0 20mm},clip]{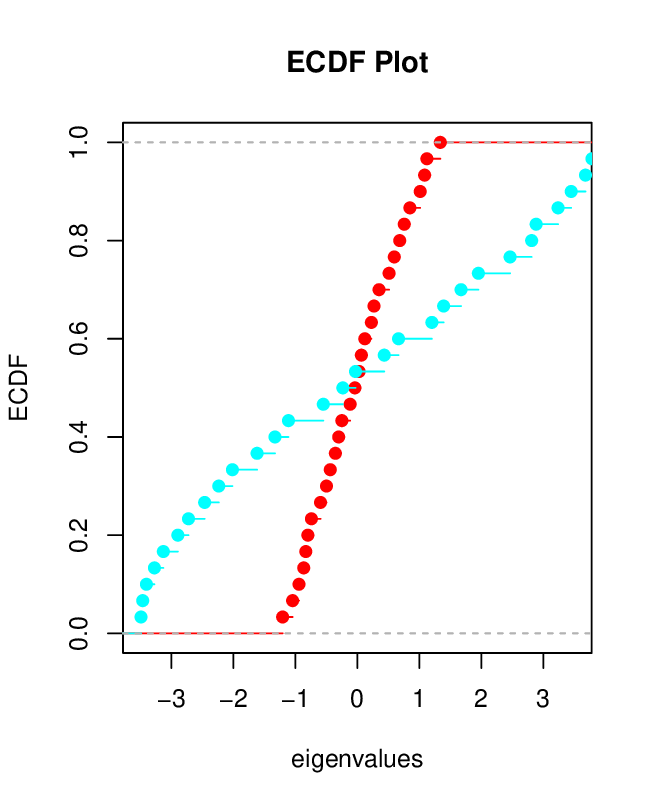} \hspace{0cm}
\includegraphics[height=60mm, width = 50 mm,trim={0 0 0 20mm},clip]{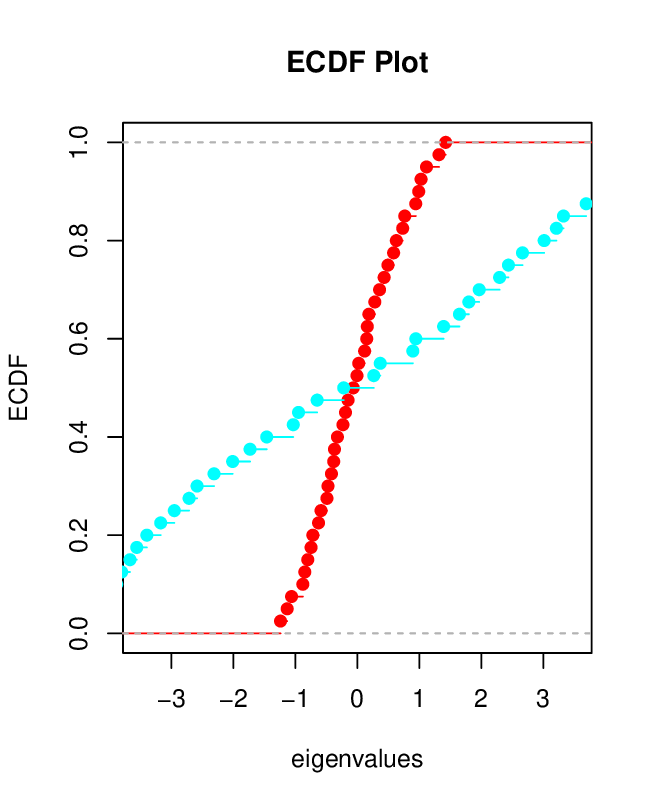} \vspace{0cm}
\includegraphics[height=60mm, width = 50 mm,trim={0 0 0 20mm},clip]{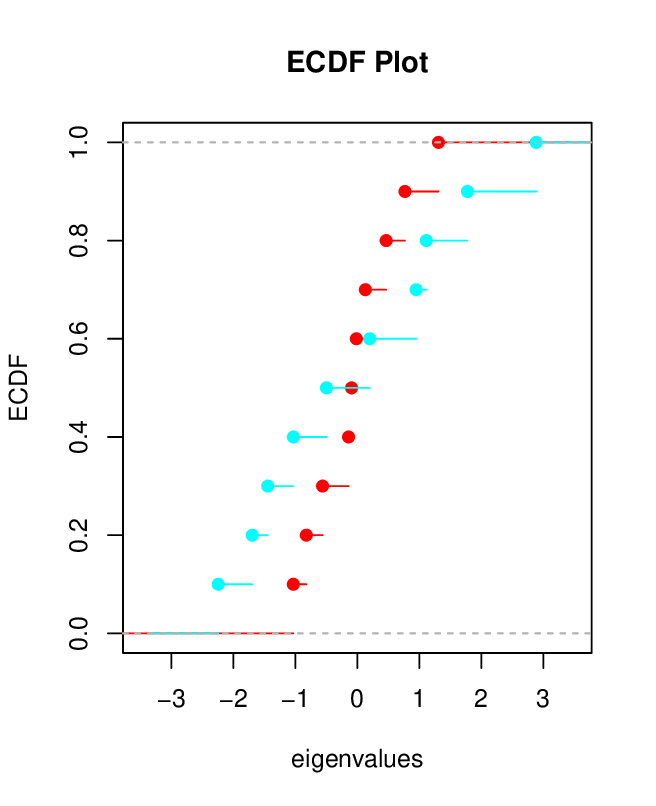} \hspace{8mm}
\includegraphics[height=60mm, width = 50 mm,trim={0 0 0 20mm},clip]{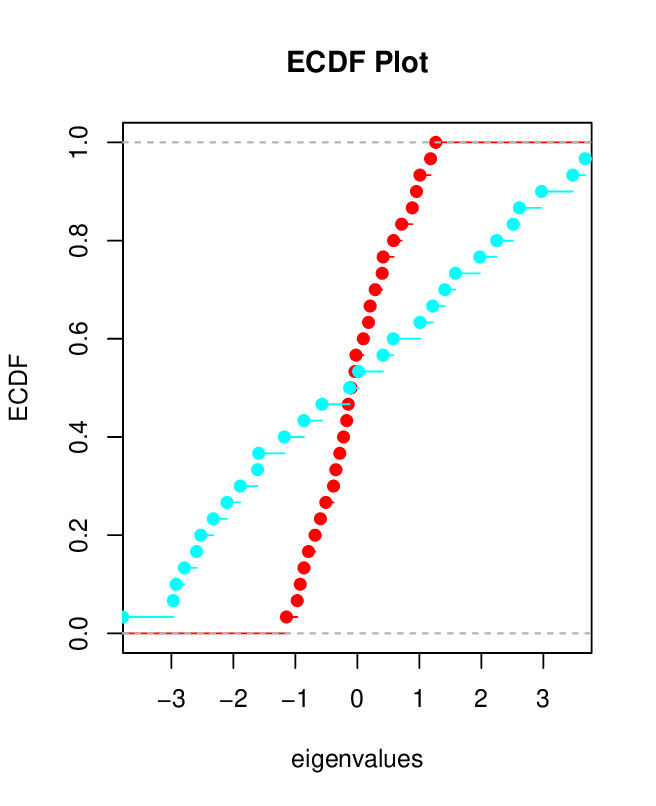} \hspace{8mm}
\includegraphics[height=60mm, width = 50 mm,trim={0 0 0 20mm},clip]{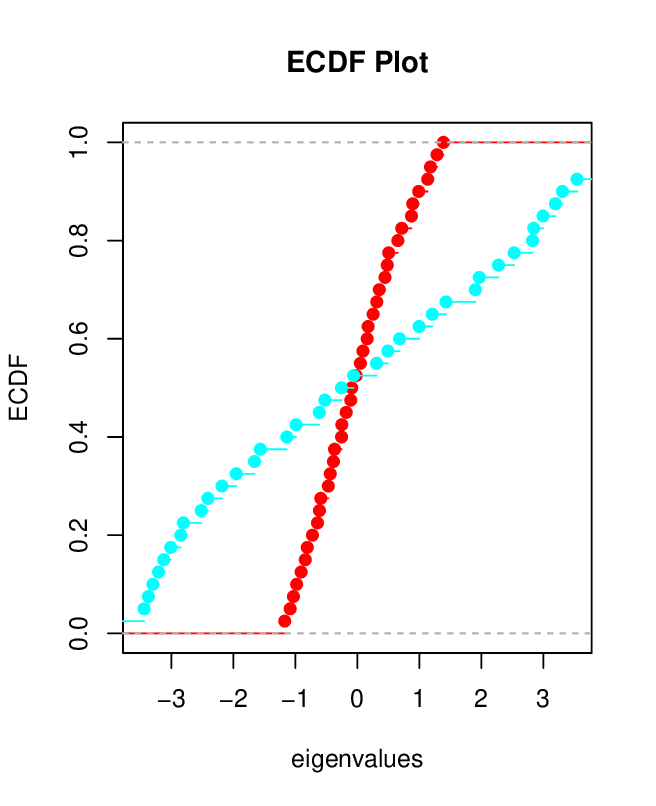} \vspace{0cm}
\caption{ECDFs of $\sqrt{n/p}(\mathbb{T}_Z-D(\mathbb{T}_Z))$ (cyan) and $\sqrt{n/p}(\mathbb{T}_X-D(\mathbb{T}_X))$ (red) for Example \ref{example: 1} setting; Columns are for $(n,p)=(100,10),(900,30),(1600,40)$ respectively  and rows correspond to $\alpha=0,1,2$ respectively. \label{fig: 5}}
\end{figure}

\section{Technical details} \label{sec: pf}

\subsection{Proof of Theorem \ref{thm: A}}
\noindent Since the entries of $\bT$ are $U$-statistics, their asymptotic behavior is governed by Hoeffding projections (see \citet{bose2018u} for details). As we will show, the first-order projection suffices for our purposes. We decompose $\bT$ as $\bT = \bT_1 + \bT_2$, where the entries of $\bT_1$ correspond essentially to the first-order projections, while $\bT_2$ captures the remainder. It will be shown that the term $\bT_2$ is negligible in the sense that it does not affect the LSD. Consequently, after appropriate centering and scaling, the LSDs of $\bT$ and $\bT_1$ are identical. 
\subsubsection{Hoeffding's decomposition of  $\bT$} 
Let $\bsX_i = (X_{1i},X_{2i},\ldots,X_{pi})^\prime$ for all $1 \leq i \leq n$. 
By Assumption 1, for $1 \leq k \neq l \leq p$ and $1 \leq i \neq j \leq n$, we have
\begin{eqnarray}
\bE(h((X_{ki},X_{li}),(X_{kj},X_{lj}))|\bsX_i)
&=& \bE(\Sg(X_{ki}-X_{kj})|\bsX_i)\bE(\Sg(X_{li}-X_{lj})|\bsX_i) = Y_{k,i,j}Y_{l,i,j},\hspace{0 cm} \label{eqn: a0}\\
\bE(h((X_{ki},X_{li}),(X_{kj},X_{lj}))|\bsX_j) &=& \bE(\Sg(X_{ki}-X_{kj})|\bsX_j)\bE(\Sg(X_{li}-X_{lj})|\bsX_j) = Y_{k,j,i}Y_{l,j,i}. \nonumber 
\end{eqnarray}
\begin{eqnarray}
\text{Note that} \quad \sum_{i,j\in [n], i \neq j} Y_{k,i,j}Y_{l,i,j}  = \sum_{i, j \in [n], i \neq j}  Y_{k,j,i}Y_{l,j,i} = \sum_{i, j  \in [n]}  Y_{k,i,j}Y_{l,i,j} - \sum_{i \in [n]}Y_{k,i,i}Y_{l,i,i}. \nonumber 
\end{eqnarray}
For $k, l \in [p]$, define
\begin{eqnarray}
G_{kl} = \dfrac{1}{n(n-1)}\sum_{i, j  \in [n]}  Y_{k,i,j}Y_{l,i,j}, \quad \text{and} \quad F_{kl} = \begin{cases} \dfrac{1}{n(n-1)}
\displaystyle{\sum_{i \in [n]}Y_{k,i,i}Y_{l,i,i}},\  \ \text{if $k \neq l$}, \\
0\  \ \text{otherwise}.
\end{cases} \label{eqn: defgf}
\end{eqnarray}
Let $\bG$ and $\bF$ be the matrices whose $(k,l)$-th entry are $G_{kl}$ and $F_{kl}$ respectively. 
Let $\bG^0$ be a diagonal matrix whose $k$-th diagonal entry is 
$$\frac{1}{n(n-1)}\sum_{i,j \in [n]}\bE(Y_{k,i,j}^2) = \bE(G_{kk}).$$ 
\noindent For $1 \leq k \neq l \leq p$ and $1 \leq i \neq j \leq n$,  define  
\begin{eqnarray}
\tilde{H}_{kl}(\bsX_i,\bsX_j) &=& h((X_{ki},X_{li}),(X_{kj},X_{lj})) - Y_{k,i,j}Y_{l,i,j} -  Y_{k,j,i}Y_{l,j,i} \quad \text{and} \quad \nonumber \\
H_{kl} &=& \begin{cases} \frac{1}{n(n-1)}\displaystyle{\sum_{1 \leq i \neq j \leq n}}\tilde{H}_{kl}(\bsX_i,\bsX_j),\ \ \text{if $k \neq l$}, \\
0,\ \ \text{otherwise}. \nonumber 
\end{cases}
\end{eqnarray}
 Let $\bH$ be the matrix whose $(k,l)$-th entry is $H_{kl}$ and $\bI_p$ be the identity matrix of order $p$. 
 
\noindent Therefore, we can write  $\sqrt{np^{-1}}(\bT - \rm{D}(\bT)) = \bT_1 +\bT_2$
\  \text{where}\ 
\begin{eqnarray}\label{eq: hoeffding}
&& \bT_1 = 2\sqrt{np^{-1}}(\bG  - g_1\bI_p),\ \ \ \text{and}\ \ \ \bT_2 = \sqrt{np^{-1}}(- 2\bF + \bH - 2(\rm{D}(\bG)-\bG^0)-2( \bG^0-g_1\bI_p)).  
\end{eqnarray}

\subsubsection{Negligibility of $\bT_2$} \label{sec: hoeffneg}
\noindent To show that the  L\'{e}vy distance between 
$\nu^{\sqrt{np^{-1}}(\bT-\rm{D}(\bT))}$ and  $\nu^{\bT_1}$ converges to $0$ almost surely, we use Lemma \ref{lem: bai} quoted from Corollary A.41 of \cite{bai2010spectral}.
Let $L(\nu^{\mathbb{A}},\nu^{\mathbb{B}})$ denote the L\'{e}vy distance between $\nu^{\mathbb{A}}$ and $\nu^{\mathbb{B}}$.
For two sequences $\{a_n:\ n \geq 1\}$ and $\{b_n:\ n \geq 1\}$, $a_n \lesssim b_n$ shall mean that, for some $C>0$, which does not depend on $n$, $a_n \leq Cb_n$ for all $n \geq 1$. 
\begin{lemma} \label{lem: bai}
Let $\mathbb{A}$ and $\mathbb{B}$ be $p\times p$ symmetric matrices. Then $(L(\nu^{\mathbb{A}},\nu^{\mathbb{B}}))^3 \leq p^{-1} \tr(\mathbb{A}-\mathbb{B})^2.$ 
\end{lemma}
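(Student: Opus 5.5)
The plan is to reduce the Lévy distance to a comparison of ordered eigenvalues, and then control that comparison with the Hoffman--Wielandt inequality.

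Let $\lambda_1\le\cdots\le\lambda_p$ and $\mu_1\le\cdots\le\mu_p$ be the eigenvalues of $\mathbb{A}$ and $\mathbb{B}$ in increasing order. For real symmetric (normal) matrices, Hoffman--Wielandt gives
\[
\sum_{i=1}^p(\lambda_i-\mu_i)^2\le \tr(\mathbb{A}-\mathbb{B})^2 .
\]
So it is enough to bound $L(\nu^{\mathbb{A}},\nu^{\mathbb{B}})$ by a functional of the paired differences $\lambda_i-\mu_i$.

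\textbf{Step 1: a combinatorial bound.} Fix $\varepsilon>0$ and put $m=\#\{i:\ |\lambda_i-\mu_i|>\varepsilon\}$. For any $x\in\mathbb{R}$, each index $i$ with $\lambda_i\le x$ and $|\lambda_i-\mu_i|\le\varepsilon$ has $\mu_i\le x+\varepsilon$. Hence
\[
F^{\mathbb{A}}(x)\le F^{\mathbb{B}}(x+\varepsilon)+m/p .
\]
Exchanging the roles of $\mathbb{A}$ and $\mathbb{B}$ and evaluating at $x-\varepsilon$ gives $F^{\mathbb{B}}(x-\varepsilon)\le F^{\mathbb{A}}(x)+m/p$. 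Together these say
\[
F^{\mathbb{B}}(x-\delta)-\delta\le F^{\mathbb{A}}(x)\le F^{\mathbb{B}}(x+\delta)+\delta \quad\text{for all } x,\qquad \delta=\max(\varepsilon,m/p).
\]
Since the Lévy distance is the infimum of such $\delta$, we get $L(\nu^{\mathbb{A}},\nu^{\mathbb{B}})\le\max(\varepsilon,m/p)$.

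\textbf{Step 2: choosing $\varepsilon$.} Chebyshev-type counting together with Hoffman--Wielandt gives
\[
m\varepsilon^2\le\sum_i(\lambda_i-\mu_i)^2\le\tr(\mathbb{A}-\mathbb{B})^2 .
\]
If $\tr(\mathbb{A}-\mathbb{B})^2=0$, then $\mathbb{A}=\mathbb{B}$ and there is nothing to prove. Otherwise choose $\varepsilon=\big(p^{-1}\tr(\mathbb{A}-\mathbb{B})^2\big)^{1/3}$. Then
\[
m/p\le\varepsilon^{-2}p^{-1}\tr(\mathbb{A}-\mathbb{B})^2=\varepsilon,
\]
so $L\le\varepsilon$, and cubing gives the claim.

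\textbf{Main obstacle.} The only nontrivial ingredient is Hoffman--Wielandt for the increasingly ordered spectra, which I would quote as standard (it follows from Birkhoff's theorem on doubly stochastic matrices applied to $|U_{ij}|^2$ for the unitary $U$ relating the two eigenbases). The remaining care is with the strict versus non-strict inequalities in the definitions of $m$ and of the Lévy distance; these do not affect the bound.
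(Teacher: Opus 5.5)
Your argument is correct and complete. The paper does not prove this lemma but quotes it as Corollary A.41 of Bai and Silverstein (2010), and your route is essentially the standard proof of that corollary: Hoffman--Wielandt for the increasingly ordered spectra, the counting bound $L(\nu^{\mathbb{A}},\nu^{\mathbb{B}})\le\max(\varepsilon,m/p)$, and the choice $\varepsilon^{3}=p^{-1}\tr(\mathbb{A}-\mathbb{B})^2$.
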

\noindent To show that $\bT_2$ is negligible,
by Lemma \ref{lem: bai}, it is enough to show that 
\begin{eqnarray}
np^{-2}\tr(2\bF - \bH + 2\rm{D}(\bG)- 2g_1\bI_p)^2 \to 0\ \ \text{almost surely  as $p/n \to 0$}. \label{eqn: a1}
\end{eqnarray}
Note that $np^{-2}\tr(2\bF+2\rm{D}(\bG)-2g_1\bI_p-\bH)^2$
\begin{eqnarray*}
&\lesssim& np^{-2}\tr(\bF^2) + np^{-2}\tr((D(\bG)-\bG^0)^2) + np^{-2}\tr(\bH^2) + np^{-2}\tr(\bG^0-g_1\bI_p)^2 = A+B+C+D, \ \text{{\rm say}}.
\end{eqnarray*} We now show that each of these terms goes to $0$ almost surely. 
\vskip5pt

\noindent \textbf{Proof of $A\to 0$ almost surely}. Define
\begin{eqnarray}
K_1 = \{(k_1,k_2,k_3,k_4):\ \ 1\leq k_1 \neq k_2,  k_3 \neq k_4\leq p\}, \ \ 
I_1 = \{\{(i_1,j_1), (i_2,j_2), (i_3,j_3), (i_4, j_4)\}:\ \ 1 \leq i_r \neq j_r \leq n,\ 1 \leq r \leq 4\}. \nonumber 
\end{eqnarray}
Therefore,
\begin{eqnarray}
\frac{\bE(\tr(\bH^2))^2}{(n(n-1))^{-4}} = \sum_{(k_1, k_2, k_3, k_4)\in K_1}\bE(H_{k_1k_2}^2 H_{k_3,k_4}^2) \label{eqn: a2}  = \sum_{K_1, I_1} \bE(\tilde{H}_{k_1k_2}(\bsX_{i_1},\bsX_{j_1})\tilde{H}_{k_1k_2}(\bsX_{i_2},\bsX_{j_2})\tilde{H}_{k_3k_4}(\bsX_{i_3},\bsX_{j_3})\tilde{H}_{k_3k_4}(\bsX_{i_4},\bsX_{j_4})). \nonumber 
\end{eqnarray}
Note that, 
for all $1 \leq k \neq l \leq p$ and $1 \leq i \neq j \leq n$, by Assumptions 1 and 2, we have 
\begin{eqnarray}
 \bE(\Sg(X_{ki}-X_{kj})) &=& \bP(X_{ki}>X_{kj}) - \bP(X_{ki}<X_{kj}) = 0, \ \ \text{by Assumption 2},\ \ \ \label{eqn: d1} \\
\bE(h((X_{ki},X_{li}),(X_{kj},X_{lj}))) &=& \bE(\Sg(X_{ki}-X_{kj}))\bE(\Sg(X_{li}-X_{lj})) = 0, \label{eqn: d2} \ \  
\text{by Assumption 1 and (\ref{eqn: d1})}, \nonumber  \\
 \bE(Y_{k,i,j}) &=& \bE(\Sg(X_{ki}-X_{kj})) = 0, \ \ \text{by (\ref{eqn: d1})},  \label{eqn: d3} \\
 \bE(Y_{k,i,j} Y_{l,i,j}) &=& \bE(Y_{k,i,j})\bE(Y_{l,i,j}) = 0, \ \ \text{by Assumption 1 and (\ref{eqn: d3})}.\label{eqn: d4} 
\end{eqnarray}
Therefore, by (\ref{eqn: d2}) and (\ref{eqn: d4}), we have
\begin{eqnarray}
\bE(\tilde{H}_{kl}(\bsX_{i},\bsX_{j})) = \bE(h((X_{ki},X_{li}),(X_{kj},X_{lj}))- Y_{k,i,j}Y_{l,i,j} -  Y_{k,j,i}Y_{l,j,i}) =  0.  \label{eqn: d5}
\end{eqnarray}
Using Equation (\ref{eqn: d5}) we  can conclude that 
if there is at least one $s \in \{1,2,3,4\}$ such that neither the value of $i_s$ nor the value of $j_s$ repeats in $\cup_{w=1, w \neq s}^{4} \{i_w,j_w\}$, then 
\begin{eqnarray}
&& \bE(\tilde{H}_{k_1k_2}(\bsX_{i_1},\bsX_{j_1})\tilde{H}_{k_1k_2}(\bsX_{i_2},\bsX_{j_2})\tilde{H}_{k_3k_4}(\bsX_{i_3},\bsX_{j_3})\tilde{H}_{k_3k_4}(\bsX_{i_4},\bsX_{j_4}))  = 0. \label{eqn: d6}
\end{eqnarray}
Let $\tilde{Z}$ be a random variable which is independent of $X_{kj}$ and $X_{lj}$. Then 
\begin{eqnarray}
\bE(\tilde{H}_{kl}(\bsX_{i},\bsX_{j})\tilde{Z}) &=& \bE(\bE(\tilde{H}_{kl}(\bsX_{i},\bsX_{j})|\bsX_i)\bE(\tilde{Z}|\bsX_i)) = \bE(\bE((h((X_{ki},X_{li}),(X_{kj},X_{lj}))-Y_{k,i,j}Y_{l,i,j} - Y_{k,j,i}Y_{l,j,i})|X_{i}) \bE(\tilde{Z}|\bsX_i)) \nonumber \\
&&\hspace{-1 cm} = \bE((\bE(h((X_{ki},X_{li}),(X_{kj},X_{lj}))|\bsX_i)-Y_{k,i,j}Y_{l,i,j} - \bE(Y_{k,j,i}Y_{l,j,i})) \bE(\tilde{Z}|\bsX_i)) = 0,\ \ \text{by (\ref{eqn: a0}) and (\ref{eqn: d4})}  \ \   \label{eqn: d7}
\end{eqnarray}
 for $1 \leq i \neq j \leq n$ and $1 \leq k \neq l \leq p$. The third equality holds since $Y_{k,i,j}$ is a function of $X_{ki}$ and hence     $\bE(Y_{k,i,j}Y_{l,i,j}|\bsX_i) = Y_{k,i,j}Y_{l,i,j}$ and $\bE(Y_{k,j,i}Y_{l,j,i}|\bsX_i) = \bE(Y_{k,j,i}Y_{l,j,i})$ almost surely.

From Equation (\ref{eqn: d7}) we conclude that if there is at least one $s \in \{1,2,3,4\}$ such that either the value of $i_s$ or the value of $j_s$ does not repeats in $\cup_{w=1, w \neq s}^{4}\{i_w,j_w\}$, then 
\begin{eqnarray}
&&\bE(\tilde{H}_{k_1k_2}(\bsX_{i_1},\bsX_{j_1})\tilde{H}_{k_1k_2}(\bsX_{i_2},\bsX_{j_2})\tilde{H}_{k_3k_4}(\bsX_{i_3},\bsX_{j_3})\tilde{H}_{k_3k_4}(\bsX_{i_4},\bsX_{j_4}))  = 0. \label{eqn: d8}
\end{eqnarray}
Let 
\(I_2 = \{\{(i_1,j_1), (i_2,j_2), (i_3,j_3), (i_4, j_4)\} \in I_1:\ \ |\{i_1,j_1, i_2,j_2, i_3,j_3, i_4, j_4\}|\leq 4\}. \) 
By (\ref{eqn: a2}), we have
\begin{eqnarray}
n^2p^{-2}\bE(\tr(\bH^2))^2 &=& n^2p^{-2}(n(n-1))^{-4}(T_1 + T_2) \quad \text{where} \label{eqn: d9} \\
&& \hspace{-3 cm}T_1 =\sum_{K_1,\  I_2} \bE(\tilde{H}_{k_1k_2}(\bsX_{i_1},\bsX_{j_1})\tilde{H}_{k_1k_2}(\bsX_{i_2},\bsX_{j_2})\tilde{H}_{k_3k_4}(\bsX_{i_3},\bsX_{j_3})\tilde{H}_{k_3k_4}(\bsX_{i_4},\bsX_{j_4})), \nonumber \\
&& \hspace{-3 cm} T_2 = \sum_{K_1, \ I_1\setminus I_2} \bE(\tilde{H}_{k_1k_2}(\bsX_{i_1},\bsX_{j_1})\tilde{H}_{k_1k_2}(\bsX_{i_2},\bsX_{j_2})\tilde{H}_{k_3k_4}(\bsX_{i_3},\bsX_{j_3})\tilde{H}_{k_3k_4}(\bsX_{i_4},\bsX_{j_4})). \nonumber 
\end{eqnarray}
Equations (\ref{eqn: d6}) and (\ref{eqn: d8}) clearly implies $T_2 = 0$ and therefore 
\begin{eqnarray}
n^2p^{-2}\bE(\tr(\bH^2))^2 = n^2p^{-2}(n(n-1))^{-4}T_1. \label{eqn: d10}
\end{eqnarray} 
Moreover, as $\tilde{H}_{kl}(\bsX_i,\bsX_j)$ is bounded for all $k,l \in [p]$ and $i,j \in [n]$, we clearly have 
\begin{eqnarray}
T_1 = O(n^4p^4). \label{eqn: d11}
\end{eqnarray}
Equations (\ref{eqn: d10}) and (\ref{eqn: d11}) together imply
\(\bE(np^{-1}\tr(\bH^2))^2  = O((p/n)^2). \) 
As $p/n \to 0$, we have 
\(\bE((np^{-2}\tr(\bH^2) )^2 \leq Cp^{-2}\ \ \text{for some $C>0$}.\) 
\noindent This completes the proof of $A\to 0$ almost surely.

\noindent \textbf{Proof of $B \to 0$ almost surely}.  Define $I_3 = \{(i_1,i_2,i_3,i_4):\ i_s\in [n],\ 1 \leq s \leq 4\}$. Therefore,
\begin{eqnarray}
(n(n-1))^4 \bE(\tr(\bF^2))^2 = \sum_{K_1,\ I_3}\bE( Y_{k_1,i_1,i_1}Y_{k_2,i_1,i_1}Y_{k_1,i_2,i_2}Y_{k_2,i_2,i_2}Y_{k_3,i_3,i_3}Y_{k_4,i_3,i_3}Y_{k_3,i_4,i_4}Y_{k_4,i_4,i_4}). \label{eqn: d12}
\end{eqnarray}
As $Y_{k,i,j}$ is bounded for all  $k \in [p]$, and 
$i, j \in [n]$, 
by (\ref{eqn: d12}), we have $\bE(np^{-1}\tr(\bF^2))^2 = O((p/n)^2)$ and hence 
\begin{eqnarray}
\bE((np^{-2}\tr(\bF^2) )^2 \leq Cp^{-2}\ \ \text{for some $C>0$}. 
\end{eqnarray}
This completes the proof of $B \to 0$ almost surely.
\vskip 5pt

\noindent \textbf{Proof of $C \to 0$ almost surely}.  
Let $\tilde{Y}_{k,i,j} = Y_{k,i,j}^2  - \bE(Y_{k,i,j}^2)$. Then by (\ref{eqn: defgf}),  $(n(n-1))^{4}\bE(\tr((\rm{D}(\bG)-\bG^0)^2))^2$ equals
\begin{eqnarray}
&&\hspace{-4 cm} =\sum_{k_1,k_2=1}^{p} \sum_{\stackrel{(i_1,i_2,i_3,i_4) \in I_3}{(j_1,j_2,j_3,j_4) \in I_3}} \bE(\tilde{Y}_{k_1,i_1,j_1} \tilde{Y}_{k_1,i_2,j_2} \tilde{Y}_{k_2,i_3,j_3} \tilde{Y}_{k_2,i_4,j_4}). \label{eqn: d14}
\end{eqnarray}
As $\bE(\tilde{Y}_{k,i,j}) = 0$ for all $k \in [p]$ and $i, j\in [n]$, 
if there is at least one $s \in \{1,2,3,4\}$ such that
the value $i_s$ does not repeat in $\{i_w:\ 1 \leq w \leq 4,\ w \neq s\}$, then  
clearly we have
\begin{eqnarray}
&& \bE(\tilde{Y}_{k_1,i_1,j_1} \tilde{Y}_{k_1,i_2,j_2} \tilde{Y}_{k_2,i_3,j_3} \tilde{Y}_{k_2,i_4,j_4}) = 0. \label{eqn: d15} 
\end{eqnarray} 
Let 
$I_4  = \{(i_1,i_2,i_3,i_4) \in I_3:\ |\{(i_1,i_2,i_3,i_4\}| \leq 2\}.$ 
Then by (\ref{eqn: d14}), we have
\begin{equation}\label{eqn: d16}
(n(n-1))^{4}\bE(\tr((\rm{D}(\bG)-\bG^0)^2))^2 = T_3 + T_4,
\end{equation}
where
\begin{eqnarray}
 T_3 = \sum_{k_1,k_2=1}^{p} \ \sum_{\stackrel{(j_1,j_2,j_3,j_4) \in I_3}{(i_1,i_2,i_3,i_4) \in I_4}} \bE(\tilde{Y}_{k_1,i_1,j_1} \tilde{Y}_{k_1,i_2,j_2} \tilde{Y}_{k_2,i_3,j_3} \tilde{Y}_{k_2,i_4,j_4}), \ \  T_4 = \sum_{k_1,k_2=1}^{p} \sum_{\stackrel{(j_1,j_2,j_3,j_4) \in I_3}{(i_1,i_2,i_3,i_4) \in I_3\setminus I_4}} \bE(\tilde{Y}_{k_1,i_1,j_1} \tilde{Y}_{k_1,i_2,j_2} \tilde{Y}_{k_2,i_3,j_3} \tilde{Y}_{k_2,i_4,j_4}). \nonumber 
\end{eqnarray}
Equation (\ref{eqn: d15}) implies $T_4 =0$, and hence 
\begin{eqnarray} 
 (n(n-1))^{4}\bE(\tr((\rm{D}(\bG)-\bG^0)^2))^2 = T_3. \label{eqn: d17}
\end{eqnarray}
Moreover, as $Y_{k,i,j}$ is bounded for all $k \in [p]$ and $i, j \in [n]$, 
we have 
\begin{eqnarray}
T_3 = O(p^2n^6). \label{eqn: d18}
\end{eqnarray}
Eqns. (\ref{eqn: d17}) and (\ref{eqn: d18}) imply $\bE(np^{-1}\tr(({\rm D}(\bG)-\bG^0)^2))^2  = O(1)$ and as $p/n \to 0 $, 
\(\bE(np^{-2}\tr(({\rm D}(\bG)-\bG^0)^2))^2  \leq Cp^{-2}\ \ \text{for some $C>0$}. \) 
This completes the proof of $C\to 0$ almost surely.
\vskip 5pt

\noindent \textbf{Proof of $D\to 0$ almost surely}. Note that $np^{-2}\tr(\bG^0-g_1\bI_p)^2 = np^{-2}\tr((\bG^0)^2)+ np^{-1} g_1^2- 2g_1 np^{-2}\tr(\bG^0)$. Also, by Assumption 3, we have 
\begin{eqnarray}
  \frac{n}{p^{2}}\tr(\bG^0) &=&   \frac{n}{p^{2}} \sum_{k=1}^p \bE(\bG_{kk}) = \frac{(n(n-1))^{-1}n}{p^{2}} \sum_{k=1}^p\sum_{i,j\in [n]} \mathbb{E}(Y_{k,i,j}^2) = \frac{(n(n-1))^{-1}n}{p^{2}} \sum_{k=1}^p\sum_{i\in [n]} \tr(\bG_{k,i}) = (n/p)g_1 + o(1). \nonumber  
\end{eqnarray}
\noindent Next,
\begin{eqnarray}
    \frac{n}{p^2}\tr(\bG^0)^2 &=& \frac{n}{p^2} \sum_{k=1}^p (\bE(\bG_{kk}))^2 = \frac{(n(n-1))^{-2}n}{p^2} \sum_{k=1}^p\sum_{i_1,j_1,i_2,j_2 \in [n]} \mathbb{E}(Y_{k,i_1,j_1}^2)\mathbb{E}(Y_{k,i_2,j_2}^2) \nonumber \\
    && \hspace{-2 cm} = \frac{(n(n-1))^{-2}n}{p^2} \sum_{k=1}^p\sum_{i_1,i_2\in [n]} \tr(\bG_{k,i_1})\tr(\bG_{k,i_2}) = \frac{n^3}{(n(n-1))^2}\frac{n^2}{p^2}\sum_{k=1}^p \Big(n^{-2}\sum_{i=1}^n\tr(\bG_{k,i})\Big)^2 = T_7 + T_8 +T_9
 \nonumber \\
 && \hspace{-2 cm} = \frac{n^3}{(n(n-1))^2}\frac{n^2}{p^2}\sum_{k=1}^p \Big(n^{-2}\sum_{i=1}^n\tr(\bG_{k,i})-g_1\Big)\Big(n^{-2}\sum_{i=1}^n\tr(\bG_{k,i})\Big) + g_1\frac{n^3}{(n(n-1))^2}\frac{n^2}{p^2}\sum_{k=1}^p \Big(n^{-2}\sum_{i=1}^n\tr(\bG_{k,i})\Big)\nonumber \\
 && \hspace{-2 cm} = O\left((n/p^2) \sum_{k=1}^p \Big(n^{-2}\sum_{i=1}^n\tr(\bG_{k,i})-g_1\Big) \right) +  g_1 O\left( (np^2)^{-1}\sum_{k=1}^p\sum_{i\in [n]} \tr(\bG_{k,i}) \right) \nonumber \\
 && \hspace{-2 cm} = O\bigg[(n/p) \left(p^{-1}n^{-2} \sum_{k=1}^p\sum_{i\in [n]} \tr(\bG_{k,i})  - g_1\right)\bigg] + (n/p)g_1^2 + o(1) = (n/p)g_1^2 + o(1). \nonumber 
\end{eqnarray}
Thus  $np^{-2}\tr(\bG^0-g_1\bI_p)^2 \to 0$. This completes the proof of $D\to 0$ almost surely.

\subsubsection{LSD of $\bT_1$}
\noindent The following Lemma is used to establish the LSD of appropriately centered and scaled $\bT_1$ and for its identification.
\begin{lemma}\label{lem:bose} (Lemma 1.2.4 in \citet{bose2018patterned})
Let $\{\mathbb{A}_p:\ p \geq 1\}$ be a sequence of real symmetric random matrices. \\ Suppose that 
\vskip 3pt

\noindent \textbf{(c1)} $p^{-1}\bE\tr(\mathbb{A}_p^R) \to \lambda_R$ for all $R \geq 1$;
\vskip 3pt

\noindent \textbf{(c2)} $\bE(p^{-1}\tr(\mathbb{A}_p^R) -p^{-1}\bE\tr(\mathbb{A}_p^R) )^4 = O(p^{-2})$ for all $R \geq 1$;
\vskip 3pt

\noindent \textbf{(c3)} the moment sequence $\{\lambda_R: R \geq 1\}$ satisfies $\sum_{R=1}^{\infty} \lambda_{2R}^{-1/2R} = \infty$ (Carleman's condition), and hence determines a unique probability distribution. 
\vskip 3pt
\noindent Then $\nu^{\bA_p}$ converges almost surely to the distribution determined by the moment sequence $\{\lambda_R\}$.
\end{lemma}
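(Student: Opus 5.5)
The plan is the classical method of moments, upgraded from convergence in expectation to almost sure convergence by a Borel--Cantelli argument. Write $\beta_R(\mathbb{A}_p)=p^{-1}\tr(\mathbb{A}_p^R)=\int x^R\,d\nu^{\mathbb{A}_p}(x)$ for the $R$-th moment of the ESD. The proof has three steps: first show that for every fixed $R$, $\beta_R(\mathbb{A}_p)\to\lambda_R$ almost surely; then intersect over the countably many $R$; finally convert convergence of all moments into weak convergence using the determinacy guaranteed by \textbf{(c3)}.

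\textbf{Step 1 (a.s. convergence of each moment).} Fix $R$ and $\varepsilon>0$. By Markov's inequality applied to the fourth power and \textbf{(c2)},
\[
\bP\big(|\beta_R(\mathbb{A}_p)-\bE\beta_R(\mathbb{A}_p)|>\varepsilon\big)\le \varepsilon^{-4}\,\bE\big(\beta_R(\mathbb{A}_p)-\bE\beta_R(\mathbb{A}_p)\big)^4=O(p^{-2}).
\]
Since $\sum_p p^{-2}<\infty$, the Borel--Cantelli lemma gives $\beta_R(\mathbb{A}_p)-\bE\beta_R(\mathbb{A}_p)\to0$ almost surely. This is exactly why a fourth moment bound of order $p^{-2}$ is required, rather than a variance bound of order $p^{-1}$. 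Combined with \textbf{(c1)}, we get $\beta_R(\mathbb{A}_p)\to\lambda_R$ almost surely. Taking a countable intersection over $R\ge1$ produces a single event $\Omega_0$ with $\bP(\Omega_0)=1$ on which $\beta_R(\mathbb{A}_p)\to\lambda_R$ holds for every $R$ simultaneously.

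\textbf{Step 2 (from moments to weak convergence on $\Omega_0$).} Fix $\omega\in\Omega_0$ and set $\mu_p=\nu^{\mathbb{A}_p(\omega)}$.
\begin{itemize}
\item \emph{Tightness.} Since $\int x^2\,d\mu_p=\beta_2\to\lambda_2$, the second moments are bounded, so by Chebyshev the family $\{\mu_p\}$ is tight.
\item \emph{Moments of subsequential limits.} By Prokhorov, every subsequence has a further subsequence converging weakly to some probability measure $\mu$. For each $R$, the moments $\int |x|^{2R+2}\,d\mu_p$ are bounded, so $x^R$ is uniformly integrable along the subsequence. Hence $\int x^R\,d\mu=\lim\int x^R\,d\mu_p=\lambda_R$.
\item \emph{Identification of the limit.} By \textbf{(c3)}, Carleman's condition makes the moment problem for $\{\lambda_R\}$ determinate, so $\mu$ is the unique distribution with these moments and does not depend on the subsequence. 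Therefore the whole sequence $\mu_p$ converges weakly to it, which proves the lemma.
\end{itemize}

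\textbf{Main obstacle.} The only genuinely delicate point is Step 2, specifically justifying the passage of the limit through the unbounded test function $x^R$. I would handle it through uniform integrability deduced from boundedness of the $(2R+2)$-th moments, as above, which is standard. Everything else reduces to Markov's inequality and Borel--Cantelli applied to the summable bound in \textbf{(c2)}.
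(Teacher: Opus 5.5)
Your proof is correct and follows the standard argument behind this result; the paper gives no proof of its own and only cites Lemma 1.2.4 of \citet{bose2018patterned}. As in that argument, Markov's inequality and Borel--Cantelli applied to the summable bound in \textbf{(c2)} upgrade \textbf{(c1)} to almost sure convergence of every moment $p^{-1}\tr(\mathbb{A}_p^R)$, and the method of moments with Carleman uniqueness from \textbf{(c3)} then yields almost sure weak convergence. Your uniform-integrability step, using bounded $(2R+2)$-th moments on the full-measure event, correctly justifies passing the limit through $x^R$; the resulting subsequential limit also supplies existence of a measure with moments $\{\lambda_R\}$, which Carleman's condition alone does not guarantee.
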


\noindent Lemma \ref{lem:bose} is now applied to sequence of matrices $\bT_1$ in order to determine the LSD. In this section, we respectively verify that $\bT_1$ satisfies conditions (c1) and (c2). Condition (c3) is readily verified for the moments of a semi-circle  variable. Theorem \ref{thm: 1} then follows.

\paragraph{Verification of (c1) condition for $\bT_1$} \label{sec: pg1}
\noindent The following lemma immediately implies that $\bT_1$ satisfies condition (c1).
\begin{lemma} \label{lem: lem2}
\noindent  Let Assumptions 1,2, G1 and G2 hold and $n,p \to \infty, p/n \to 0$. 
Then, for all $R \geq 1$, \\ $\lim p^{-1}\bE\tr(\big(\sqrt{n/p}\ (\bG- g_1\bI_p)\big)^{2R-1}) = 0$ and
$\lim p^{-1}\bE\tr(\big(\sqrt{n/p}\ (\bG- g_1\bI_p)\big)^{2R})=\sum_{\pi \in {\rm{NC}}_2(2R)} g_{2\pi}$. 
\end{lemma}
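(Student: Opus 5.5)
We will prove Lemma~\ref{lem: lem2} by the moment method. The first step is to write $\bG - g_1\bI_p = \bG^{\rm off} + (\mathrm{D}(\bG)-g_1\bI_p)$, where $\bG^{\rm off}$ is the off-diagonal part of $\bG$. By the bounds for $C$ and $D$ in Section~\ref{sec: hoeffneg}, $np^{-2}\tr((\mathrm{D}(\bG)-g_1\bI_p)^2)\to 0$ in $L^2$. Since all entries are bounded, a H\"older/expansion argument then shows that replacing $\sqrt{n/p}(\bG-g_1\bI_p)$ by $W:=\sqrt{n/p}\,\bG^{\rm off}$ changes $p^{-1}\bE\tr(\cdot)^R$ by $o(1)$.

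For this, expand the $R$-th power as a sum of words in the two matrices. Each word containing at least one diagonal factor is bounded via Cauchy--Schwarz by
\[
\big(p^{-1}\bE\tr((\sqrt{n/p}\,(\mathrm{D}(\bG)-g_1\bI_p))^{2})\big)^{1/2}
\]
times a moment of the other factors. The other factors have uniformly bounded moments by the combinatorial count below. If the simple H\"older route fails, I would instead carry the diagonal entries along as extra letters in the word expansion and show directly that they only contribute at lower order.

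Next I would expand
\[
p^{-1}\bE\tr(W^R)=p^{-1}(n/p)^{R/2}(n(n-1))^{-R}\sum_{k_1\neq k_2\neq\cdots\neq k_R\neq k_1}\ \sum_{(i_b,j_b)}\bE\prod_{b=1}^R Y_{k_b,i_b,j_b}Y_{k_{\sigma_R(b)},i_b,j_b}.
\]
By independence across $(k,i)$ and $\bE Y_{k,i,j}=0$ (\eqref{eqn: d3}), a term vanishes unless every pair $(k,i)$ appearing among the factors appears at least twice. Note that $j_b$ carries no independence structure; it only selects which function of $X_{k,i}$ is used. I would encode each term as a closed walk $k_1\to k_2\to\cdots\to k_R\to k_1$ with edge labels $i_b$. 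The requirement that each vertex-label pair $(k_b,i_b)$ and $(k_{\sigma(b)},i_b)$ is matched forces the edges to be grouped by their $i$-labels.

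Counting follows the usual Wigner-type argument. Summing over the free $j$'s gives a factor of order $n^{R}$ per matched structure. The $j$-sums reproduce exactly the entries of $\bG_{k,i}$: for a matched pair of edges with the same $i$, summing over $j_b,j_s$ gives $\tr(\bG_{k_b,i}\bG_{k_{\sigma(b)},i})$ when the vertex identifications are $k_b=k_{\sigma(s)}$ and $k_s=k_{\sigma(b)}$. Suppose the edge-labels form a partition $\pi$ of $[R]$ with all blocks of size at least $2$. Then there are at most $|\pi|$ free $i$'s, and the free $k$'s number at most $|\pi|+1$ for a pair partition arising from a non-crossing (tree-like) walk, and strictly fewer otherwise. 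The normalization $p^{-1}(n/p)^{R/2}n^{-2R}\cdot n^{R}\cdot n^{|\pi|}p^{|\pi|+1}$ is $O(1)$ exactly when $R$ is even and $\pi$ is a non-crossing pair partition. Crossing partitions lose a factor of $p$ and larger blocks lose a factor of $n$, so both vanish. Odd $R$ forces a block of size at least $3$, so odd moments tend to $0$.

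The surviving terms, with the $\delta$'s encoding the identifications $k_b=k_{\sigma_{2R}(s)}$, $k_s=k_{\sigma_{2R}(b)}$ and $i_b=i_s$, are precisely the expression in Assumption~G2, and hence converge to $g_{2\pi}$. Assumption~G1 enters only through the diagonal reduction, with the rate $\max(\sqrt{np},n/p)$ used to kill the $n/p$ blow-up.

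The main obstacle is the counting step. Because $Y_{k,i,j}$ is not symmetric in $(i,j)$ and $j$ is unconstrained, I must check carefully that the $j$-sums over coincidences (for example $j_b=i_s$, or the terms $i=j$ already included in $\bG$) are of lower order, and that a block of size at least $3$ among the $(k,i)$ matches indeed loses a factor of $n$ rather than only of $p$. I would handle this by bounding every non-pair configuration using boundedness of $Y$ and a vertex/edge count, treating the $j$-indices as free with weight $n$ each, which gives an extra $n^{-1/2}$ or $(p/n)^{1/2}$ factor.
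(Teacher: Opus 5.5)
\textbf{There is a genuine gap, and it sits in your first step.} It is the same step behind your closing claim that G1 ``kills the $n/p$ blow-up''. Discarding the diagonal requires $p^{-1}\bE\tr\big((\sqrt{n/p}\,(\mathrm{D}(\bG)-g_1\bI_p))^2\big)\to 0$. The deterministic part of this quantity is $(n/p)\,p^{-1}\sum_k(\bE G_{kk}-g_1)^2$, which measures the spread of $\bE G_{kk}$ across rows. Assumption G1 controls only the row average $p^{-1}\sum_k(\bE G_{kk}-g_1)$, not this spread.

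The paper's bound for this term in Section~\ref{sec: hoeffneg} explicitly invokes Assumption 3. Its algebra there replaces $\sum_k(b_k-g_1)b_k$, with $b_k=n^{-2}\sum_i\tr(\bG_{k,i})$, by $O(\sum_k(b_k-g_1))$, which is valid only under per-row control.

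In fact no argument can close this under Assumptions 1, 2, G1, G2 alone. Take rows i.i.d.\ across $i$, continuous for odd $k$ (so $\mathrm{Var}(Y_{k1})=1/3$) and Bernoulli$(1/2)$ for even $k$ (so $Y_{k1}=\pm 1/2$, with variance $1/4$), and take $p$ even.
\begin{itemize}
\item G1 holds exactly with $g_1=7/24$, and G2 holds via G2-a.
\item However, $\bE G_{kk}=g_1\pm\frac{1}{24}+O(n^{-1})$.
\item Hence $p^{-1}\bE\tr\big((\sqrt{n/p}\,(\bG-g_1\bI_p))^2\big)\ge (n/p)\,p^{-1}\sum_k(\bE G_{kk}-g_1)^2\sim n/(576\,p)\to\infty$.
\end{itemize}
So the lemma as literally stated fails. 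Your fallback of carrying diagonal letters through the expansion cannot rescue it either. The paper's own proof handles the self-loop matchings $k_l=k_{\sigma_R(l)}$ inline via the $\sqrt{np}$ rate in G1, rather than by a Cauchy--Schwarz reduction. That step has the same defect: it pulls the $k_1$-dependent factor $p^{-1}\bE(\cdots)^{R-1}_{k_1k_1}$ out of the sum over $k_1$ before applying G1.

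\textbf{What survives is your second half.} It is the right argument for the off-diagonal matrix $W=\sqrt{n/p}\,(\bG-\mathrm{D}(\bG))$, and it needs no G1.
\begin{itemize}
\item Since $k_b\neq k_{\sigma_R(b)}$, an $i$-value used at only one position leaves its $(k,i)$ pairs unmatched, so $|I|\le R/2$.
\item Combined with $|K|+|I|\le R+1$, every term is $O((p/n)^{R/2-|I|})$.
\item The leading trees force reversed traversals and produce $\tr(\bG_{k_b,i}\bG_{k_{\sigma_R(b)},i})$, which is exactly the expression in G2.
\end{itemize}
One count needs fixing. For an $i$-block of size $m\ge 3$, the number of free $k$'s is at most $R+1-|\pi|$, not $|\pi|+1$; for example, $R=3$ with one $i$-block and three distinct $k$'s. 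The loss is therefore $(p/n)^{R/2-|\pi|}$ rather than a factor of $n$, which still suffices.

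This off-diagonal result is also exactly what Theorem~\ref{thm: A} needs, because the $g_1$ terms in $\bT_1$ and $\bT_2$ cancel: $\sqrt{n/p}\,(\bT-\mathrm{D}(\bT))=2W+\sqrt{n/p}\,(\bH-2\bF)$.

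If you want the lemma literally with $g_1\bI_p$, you must add per-row control such as Assumption 3. That gives $\max_k\sqrt{n/p}\,|\bE G_{kk}-g_1|=O(\sqrt{p/n})$. Together with $\bE|G_{kk}-\bE G_{kk}|^{2m}\lesssim n^{-m}$, all moments of the diagonal part then vanish, and your Cauchy--Schwarz reduction to $W$ goes through.
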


\noindent \textbf{Proof of Lemma \ref{lem: lem2}}: \hspace{1mm}
For any positive integer $R$,  let $\sigma_R$ denote the cyclic permutation on $\{1,2,\ldots R \}$ such that $\sigma_R(r)=r+1$, $1\leq r\leq R-1$, and $\sigma_{R}(R)=1$. Now   $ {p^{-1}}\bE\big[\tr\big(\sqrt{np^{-1}}(\bG-g_1\bI_p)\big)^R\big]$ equals
\begin{eqnarray}
   &&\frac{n^{R/2}}{p^{(R/2)+1}}\sum_{\substack{k_r\in[p]\\r \in[R]}}\bE\big[\prod_{r \in [R]}\big(G_{k_r,k_{\sigma_R(r)}}-g_1\delta_{k_rk_{\sigma_R(r)}}\big)\big] =\frac{n^{R/2}}{p^{(R/2)+1}(n(n-1))^R}\sum_{\substack{k_r\in[p]\\r \in[R]}}\bE\bigg[\prod_{r \in [R]}\sum_{\substack{i_r,j_r\in[n]}}\big(Y_{k_r,i_r,j_r}Y_{k_{\sigma_R(r),i_r,j_r}}-g_1\delta_{k_r,k_{\sigma_R(r)}}\big)\bigg] \nonumber\\
   && =\frac{n^{R/2}}{p^{(R/2)+1}(n(n-1))^R}\sum_{\substack{k_r\in[p]\\r \in[R]}}\sum_{\substack{i_r,j_r\in[n]\\r \in[R]}}\bE\bigg[\prod_{r \in [R]}\big(Y_{k_r,i_r,j_r}Y_{k_{\sigma_R(r),i_r,j_r}}-g_1\delta_{k_r,k_{\sigma_R(r)}}\big)\bigg].    \label{eq:mainnew}
\end{eqnarray}
Consider the connected bipartite graph between the  $k$-indices $K=\{k_r: r \in [R]\}$ and $i$-indices $I=\{i_r:r \in[R]\}$ with edges $E =\{(k_{r+\epsilon_r},i_r): r \in [R],\epsilon=0,1\}$. A single edge in $E$  cannot occur in isolation. Suppose there exists an edge $(k_{q+\epsilon},i_q) \in E$ which does not match with any other edges of $E$ for some $q$ and $\epsilon=0,1$. Then, clearly $k_{q+\epsilon} \neq k_{q+1-\epsilon}$ and (\ref{eq:mainnew}) is reduced to the following
\begin{eqnarray}
&& \frac{n^{R/2}}{p^{(R/2)+1}(n(n-1))^R}\sum_{\substack{k_r\in[p]\\r \in[R]}}\sum_{\substack{i_r,j_r\in[n]\\r \in[R]}} \bE(Y_{k_{q+\epsilon},i_q,j_q})\bE\bigg[Y_{k_{q+1-\epsilon},i_q,j_q}\prod_{\substack{r \in [R] \\ r \neq q}}\big(Y_{k_r,i_r,j_r}Y_{k_{\sigma_R(r),i_r,j_r}}-g_1\delta_{k_r,k_{\sigma_R(r)}}\big)\bigg] \nonumber 
\end{eqnarray}
which is $0$, since $\bE(Y_{k_r,i_r,j_r})=0$ by Assumptions 1 and 2.  This means that the value of (\ref{eq:mainnew}) is $0$ whenever an edge in $E$ exists in isolation.
Hence it is sufficient to consider only those edges which repeat at least twice. There are at most $R$ such distinct edges and since the graph is connected, we have
\begin{eqnarray}
    |K|+|I| \leq |E|+1=R+1, \label{eq: numb}
\end{eqnarray}
\begin{eqnarray}
    p^{-1}\bE\big[\tr\big(\sqrt{np^{-1}}(\bG-g_1\bI_p)\big)^R\big]=\frac{p^{-((R/2)+1)}n^{R/2}}{(n(n-1))^R}\sum_{\substack{{K,I}\\|E|\leq R}}\sum_{\substack{j_r\in[n] \\ r \in [R]}}\bigg(\prod_{r \in[R]}\big(Y_{k_r,i_r,j_r}Y_{k_{\sigma_R(r),i_r,j_r}}-g_1\delta_{k_r,k_{\sigma_R(r)}}\big)\bigg). \label{eq: contriterm}
\end{eqnarray}
Also note note that from (\ref{eq: numb}) we have
\begin{eqnarray}
    p^{-1}\bE\big[\tr\big(\sqrt{np^{-1}}(\bG-g_1\bI_p)\big)^R\big]\lesssim O\big((p/n)^{((R/2)-|I|)}\big). \label{eqn: kknew}
\end{eqnarray}
Since $p/n \to 0$ as $p,n \to \infty$, for any sort of contribution from (\ref{eq: contriterm}) the following must hold:
\(    |K| = (R/2)+1,\ \ \
    |I|=(R/2). \) 
This means that for (\ref{eq: contriterm}) to contribute, not only are the restrictions on $|K|,|I|$ necessary, it is also essential that the number of distinct edges $|E|$ is exactly equal to $R$. This means that each edge is repeated exactly twice.  Let $P_2(2R)$ be the set of all paired partitions of $\{1,2,\ldots 2R\}$.   
It is easily verifiable that the set of all connected bipartite graphs for which $|K|+|I| =R+1$ has a bijection with $P_2(2R)$.  Let $K_{\pi}$ and $I_{\pi}$ be respectively the set of distinct $k$-indices and $i$-indices induced by $\pi \in P_2(2R)$. 
\begin{eqnarray}
    p^{-1}\bE\big[\tr\big(\sqrt{np^{-1}}(\bG-g_1\bI_p)\big)^R\big]&=&\frac{p^{-((R/2)+1)}n^{R/2}}{(n(n-1))^R}\sum_{\substack{\pi\in P_2(2R)}}\sum_{\substack{K_{\pi},I_{\pi}\\j_r\in[n],r\in[R]}}\bE\bigg(\prod_{r \in[R]}\big(Y_{k_r,i_r,j_r}Y_{k_{\sigma_R(r),i_r,j_r}}-g_1\delta_{k_r,k_{\sigma_R(r)}}\big)\bigg). \hspace{0.5 cm} \label{eq: contri1}
\end{eqnarray}
Consider the case when there exists at least one edge matching of the type $(k_l,i_l)=(k_{\sigma_R(l)},i_l)$ for some $l \in [R]$.  Without loss of generality, assume $l=1$. Then, the contribution of this matching in $p^{-1}\bE\big[\tr\big(\sqrt{np^{-1}}(\bG-g_1\bI_p)\big)^R\big]$ equals
\begin{eqnarray}
&&  \bigg|\sqrt{(n/p)}n^{-2} \sum_{k_1,i_1,j_1} \bE(Y_{k_1,i_1,j_1}Y_{k_1,i_1,j_1} - g_1) p^{-1}\bE\Big(\big(\sqrt{np^{-1}}(\bG-g_1\bI_p)\big)^{R-1}_{k_1,k_1}\Big)\bigg|  \nonumber \\
& = & \bigg| \sqrt{\frac{p}{n}} \sum_{k_1=1}^p \frac{n}{p} (n^{-2}\sum_{i_1=1}^n \tr(\bG_{k_1,i_1}) - g_1) \frac{1}{p}\bE\Big(\big(\sqrt{\frac{n}{p}}(\bG-g_1\bI_p)\big)^{R-1}_{k_1,k_1}\Big)\bigg| \leq  \sqrt{np} \big| \frac{1}{n^2 p} \sum_{k_1=1}^p  \sum_{i_1=1}^n \tr(\bG_{k_1,i_1}) - g_1 \big| = o(1), \nonumber 
\end{eqnarray}
where $p^{-1}\bE\Big(\big(\sqrt{np^{-1}}(\bG-g_1\bI_p)\big)^{R-1}_{k_1,k_1}\Big) = O(1)$  proceeds by the same reasoning as in equation (\ref{eqn: kknew}).
This implies that asymptotically there is no contribution from  matching of the type $(k_l,i_l)=(k_{\sigma_R(l)},i_l)$ for some $l \in [R]$.  Therefore only other non-crossing paired partitions and crossing partitions of $[2R]$ are considered for analysis. Let $\bI(.)$ denote the indicator function and let $\zeta(r)=\bI(r\hspace{0.5mm} \text{is even})$. Therefore, (\ref{eq: contri1}) can be rewritten as 
 \begin{eqnarray}
      &&\frac{p^{-((R/2)+1)}n^{R/2}}{(n(n-1))^R}\sum_{\substack{\pi \in P_2(2R) }}\sum_{\substack{K_{\pi},I_{\pi}\\j_r\in[n],r\in[R]}}\ \bigg(\prod_{(b,s) \in \pi}\big(G_{k_{\lceil b/2 \rceil+\zeta(b)},i_{\lceil b/2 \rceil}}(j_{\lceil b/2 \rceil},j_{\lceil s/2 \rceil})\big)\delta_{k_{\lceil b/2 \rceil+\zeta(b)},k_{\lceil s/2 \rceil+\zeta(s)}}\delta_{i_{\lceil b/2 \rceil}i_{\lceil s/2 \rceil}}\bigg)\nonumber
 \end{eqnarray}
 Let $NC_2(2R)$ be the set of all non-crossing paired partitions of $[2R]$. 
 Similar to the arguments of Theorem 2.1.3 from \citet{bose2018patterned}, it can be shown that the contribution of the above term for $P_2(2R)/NC_2(2R)$ is $0$ in the limit. 
 This means that all the contribution stems from non crossing paired partitions of $2R$ i.e. $NC_2(2R)$.  Therefore, $p^{-1}\bE\big[\tr\big(\sqrt{np^{-1}}(\bG-\gamma_1\bI_p)\big)^R\big] $ equals
 \begin{eqnarray}
 \frac{p^{-((R/2)+1)}}{n^{3R/2}}\sum_{\substack{\pi \in NC_2(2R) }}\sum_{\substack{K_{\pi},I_{\pi}\\j_r\in[n],r\in[R]}}\ \bigg(\prod_{(b,s) \in \pi}\big(\bG_{k_{\lceil b/2 \rceil+\zeta(b)},i_{\lceil b/2 \rceil}}(j_{\lceil b/2 \rceil},j_{\lceil s/2 \rceil})\big)\delta_{k_{\lceil b/2 \rceil+\zeta(b)},k_{\lceil s/2 \rceil+\zeta(s)}}\delta_{i_{\lceil b/2 \rceil}i_{\lceil s/2 \rceil}}\bigg) + o(1). \label{eqn: kkknew}
 \end{eqnarray}
 \noindent Next, observe the following facts about elements of $I$.
 \\(a) Value of the index $i_r$ has to be repeated at least twice in $I$ for all $r \in [R] $. \\ 
 \noindent (b) $|I| = R/2$ (by (\ref{eqn: kknew})).\\
 \noindent (c) Only terms obtained from non-crossing paired partitions of $[2R]$ contribute.\\
\noindent  The fact (b) clearly indicates 
\(p^{-1}\bE\big[\tr\big(\sqrt{np^{-1}}(\bG-\gamma_1\bI_p)\big)^R\big]  = o(1)\ \ \text{if $R$ is odd}. \) 
Therefore only even values of R are considered from hereon. Also (a) - (c) together induce non-crossing paired partitions on $I$ with even $R$. Let $NC_2(I)$ be the set of all non-crossing paired partitions of $I$. Then $NC_2(2R)$ along with (a) - (c) has a bijection $F$ with $NC_2(I)$ where $k_b = k_{\sigma_{R}(s)}$ and $k_s = k_{\sigma_{R}(b)}$ with $(i_b,i_s)\in \pi \in NC_2(I)$. \vskip 3pt
\noindent  Then (\ref{eqn: kkknew}) reduces that  $\lim p^{-1}\bE\big[\tr\big(\sqrt{np^{-1}}(\bG-g_1\bI_p)\big)^R\big] $ equals
 \begin{eqnarray}
     \sum_{\pi \in NC_2(I)}{\rm{lim}}\frac{p^{-((R/2)+1)}}{n^{3R/2}}\sum_{K_{F^{-1}(\pi)}, I_{F^{-1}(\pi)}}\bigg(\prod_{(i_b,i_s)\in \pi}\big((Y_{k_b,i_b,j_b}Y_{k_{\sigma_R(b)},i_b,j_b}Y_{k_{\sigma_{R}(s)},i_b,j_s}Y_{k_s,i_b,j_s})(\delta_{k_bk_{\sigma_{R}(s)}}\delta_{k_sk_{\sigma_{R}(b)}}\delta_{i_bi_s})\big)\bigg). \label{eq: main}
 \end{eqnarray}
 Note that the particular matching $\delta_{k_bk_{\sigma_{R}(s)}}\delta_{k_sk_{\sigma_{R}(b)}}$ of elements of $K$ is considered as crossing partitions of elements of $E$ have no contribution.   Using definitions of $\bG_{k,i}$, the term in (\ref{eq: main}) reduces to the following \\
     \begin{eqnarray}
      &&\sum_{\pi \in NC_2(I)}{\rm{lim}}\frac{p^{-((R/2)+1)}}{n^{3R/2}}\sum_{K_{F^{-1}(\pi)}, I_{F^{-1}(\pi)}}\sum_{\substack{j_r \in[n]\\r \in [R]}}\bigg(\prod_{(i_b,i_s)\in \pi}\big(\bG_{k_b,i_b}(j_b,j_s)\bG_{k_{\sigma_R(b)},i_b}(j_s,j_b))(\delta_{k_bk_{\sigma_{R}(s)}}\delta_{k_sk_{\sigma_{R}(b)}}\delta_{i_bi_s})\big)\bigg)\nonumber\\
       &=&\sum_{\pi \in NC_2(I)}{\rm{lim}}\frac{p^{-((R/2)+1)}}{n^{3R/2}}\sum_{K_{F^{-1}(\pi)}, I_{F^{-1}(\pi)}}\bigg(\prod_{(i_b,i_s)\in \pi}\big(\tr(\bG_{k_b,i_b}\bG_{k_{\sigma_R(b)},i_b})(\delta_{k_bk_{\sigma_{R}(s)}}\delta_{k_sk_{\sigma_{R}(b)}}\delta_{i_bi_s})\big)\bigg)\nonumber\\
    &=&\sum_{\pi \in NC_2(I)}{\rm{lim}}\frac{p^{-((R/2)+1)}}{n^{3R/2}}\sum_{K_{F^{-1}(\pi)}}\bigg(\prod_{(i_b,i_s)\in \pi}\big(\sum_{i_b}\tr(\bG_{k_b,i_b}\bG_{k_{\sigma_R(b)},i_b})(\delta_{k_bk_{\sigma_{R}(s)}}\delta_{k_sk_{\sigma_{R}(b)}}\delta_{i_bi_s})\big)\bigg)\nonumber\\
    &=&\sum_{\pi \in NC_2(I)}{\rm{lim}}(n^{-(R/2)}p^{-1})\sum_{K_{F^{-1}(\pi)}}\bigg(\prod_{(i_b,i_s)\in \pi}\big(p^{-1}n^{-2}\sum_{i_b}\tr(\bG_{k_b,i_b}\bG_{k_{\sigma_R(b)},i_b})(\delta_{k_bk_{\sigma_{R}(s)}}\delta_{k_sk_{\sigma_{R}(b)}}\delta_{i_bi_s})\big)\bigg) = \sum_{\pi \in NC_2(I)} g_{2\pi}. \nonumber 
    \end{eqnarray}
    This proves Lemma \ref{lem: lem2}.  

\begin{remark} \label{rem: heinypf}
Suppose that $\{k_u : 1 \leq u \leq R\}$ induces a partition with $B$ blocks, and $\{i_u : 1 \leq u \leq R\}$ induces a partition with $R_1$ blocks. For each $u \in {1,\dots,R_1}$ and $v \in {1,\dots,B}$, let $\tilde{f}_{uv}$ denote the frequency of the $i$-index corresponding to the $u$-th block appears with the $k$-index associated with the $v$-th block. Thus, encountering the each $i$-index appears twice,   $\sum_{v=1}^B \tilde{f}_{uv}$ equals twice the cardinality of of the $u$-th block. Let $f_{tu} = \sum_{v=1}^B \mathbb{I}(f_{uv} = t)$.  Then, by the properties of $\tilde{Y}_{ki}$ mentioned in Remark \ref{rem: heiny} and noting that $\sum_{u=1}^{R_1}\sum_{t=1}^{2R} tf_{tu} =2R$,  for that particular partition we have, $(n/p)^{R/2} p^{-1}\tr(\tilde{G}^{\mathrm{IID}\ R}) = O(p^{-R/2-1+B}n^{R/2 + R_1- \sum_{u=1}^{R_1} \big[f_{1u} + \sum_{t=2}^{2R} f_{tu} (t/2)\big]}) = O(p^{-R/2-1+B}n^{\sum_{u=1}^{R_1} \big[1-(1/2)f_{1u} - \sum_{t=2}^{2R} f_{tu} (t/4)\big]})$ which is contributing only if $B = R/2+1$, $f_{2u}=2$, $f_{tu}=0$ for all $t \neq 2$ and  $u$ and consequently $R_1 = R/2$.  Clearly, this will direct us to (\ref{eqn: kkknew}) after replacing $\mathbb{G}_{k,i}$'s by $n \mathbb{G}_{k,i}$. In this case, $\mathbb{V}{\rm{ar}}(Y_{k1})$ will replaced by $1$.  Thus \eqref{eqn: g1A} holds with $g_1=1$, and Assumption~G2-a is satisfied with the LSD of $\Sigma_p$ degenerated at $1$.   Hence, arguing as in Corollary~\ref{cor: IID} and taking $a$ degenerated at $1$, the LSD of 
\(
\sqrt{n/p}\,(\tilde G^{\mathrm{IID}}-\bI_p)
\)
is the standard semicircle law. This coincides with the LSD of 
\(
(3/2)\sqrt{n/p}\,\mathbf{T}
\)
given in Theorem~2.5(1) of \cite{dornemann2025ties}.

\end{remark}


\paragraph{Verification of (c2) for $\bT_1$} \label{sec: pg2}
\noindent Let $\Pi = \tr\big(\sqrt{np^{-1}}(\bG-g_1\bI_p)\big)^R$. Then $\bE(\Pi - \bE(\Pi))^4  = A_1 - 4A_2A_4 + 6A_3A_4^2 - 3A_4^4$ where $A_i = \bE(\Pi^{5-i})$ for all $1 \leq i \leq 4$. Note that
\begin{eqnarray}
A_1 &=& \frac{n^{2R}}{p^{2R}}\sum_{\substack{k_{ur} \in [p],   \\ r \in [R],  \ u \in [4]}} \bE\Big(\prod_{\substack{r \in [R], \ u \in [4]}} \big(G_{k_{ur} k_{u\sigma_R(r)}}-g_1\delta_{k_{ur}k_{u\sigma_R}(r)}\big) \Big) \nonumber \\
&=& \dfrac{n^{2R}}{p^{2R}(n(n-1))^{4R}} \sum_{\substack{k_{ur} \in [p], \  i_{ur}, j_{ur} \in [n] \\ r \in [R], \  u \in [4]}} \bE\Big(\prod_{\substack{r\in [R], \  u \in [4]}}  \big(Y_{k_{ur},i_{ur},j_{ur}}Y_{k_{u\sigma_R(r)},i_{ur},j_{ur}}- \frac{(n(n-1))}{n^2}g_1\delta_{k_{ur}k_{u\sigma}(r)}\big)\Big).\nonumber
\end{eqnarray}
Let $K_{u2} = \{k_{ur}:\ r \in [R]\}$, $I_{u5}=\{i_{ur}:\  r \in [R]\}$ and $E_{u1} =
\{(k_{u(r+\epsilon_r)},i_{ur}):\ r \in [R],\ \epsilon_r = 0,1\}$.
Consider any connected bipartite graph between the distinct $k$-indices  $\cup_{u \in [4]} K_{u2}$  and $i$-indices $\cup_{u \in [4]} I_{u5}$ with edges $\cup_{u \in [4]} E_{u1}$. Let $\tilde{E}$ be the subset of $\cup_{u \in [4]} E_{u1}$ where each edge appears at least twice. For any subset $E$ of $\cup_{u \in [4]} E_{u1}$, define
\begin{eqnarray}
    A_{1E} &=& (n(n-1))^{-4R} \sum_{\substack{k_{ur} \in [p], \ i_{ur}, \ j_{ur} \in [n] \\ r \in [R], \  u \in [4]}} \bE\Big(\prod_{\substack{r\in [R], \ u \in [4]}}  Y_{k_{ur},i_{ur},j_{ur}}Y_{k_{u\sigma_R(r)},i_{ur},j_{ur}}\Big) \bI({E}). \nonumber 
\end{eqnarray}
Clearly, $A_{1(\cup_{u \in [4]} E_{u1} \setminus\tilde{E})}=0$ and hence $A_1 = A_{1\tilde{E}}+A_{1(\cup_{u \in [4]} E_{u1} \setminus\tilde{E})}=A_{1\tilde{E}}$. Now consider the following subsets of $\tilde{E}$:
\begin{eqnarray}
\tilde{E}_{1} &=& \text{set of all edges in $\tilde{E}$ where ${E}_{u1}$'s are disconnected for all $u \in [4]$}, \nonumber \\
\tilde{E}_{2,((s_1,s_2),s_3,s_4)} &=& \text{set of all edges in $\tilde{E}\setminus \tilde{E}_{1}$ where $E_{s_11}\cup E_{s_21}$, $E_{s_31}$ and $E_{s_41}$} \nonumber \\
&& \text{are disconnected for all $s_1 \neq s_2 \neq s_3 \neq s_4$}, \nonumber\\
\tilde{E}_{2} &=& \tilde{E}_{2,((1,2),3,4)} \cup \tilde{E}_{2,((1,3),2,4)}\cup \tilde{E}_{2,((1,4),2,3)} \cup  \tilde{E}_{2,((2,3),1,4)} \cup \tilde{E}_{2,((2,4),1,3)} \cup \tilde{E}_{2,((3,4),1,2)} \nonumber 
\end{eqnarray}
\begin{eqnarray}
\tilde{E}_{13,((s_1,s_2,s_3),s_4)} &=& \text{set of all edges in $\tilde{E}\setminus (\tilde{E}_{1}\cup \tilde{E}_{2})$ where $E_{s_11}\cup E_{s_21}\cup E_{s_31}$ and $E_{s_41}$} \nonumber \\
&& \text{are disconnected for all $s_1 \neq s_2 \neq s_3 \neq s_4$}, \nonumber \\
\tilde{E}_{3} &=& \tilde{E}_{3,((1,2,3),4)} \cup \tilde{E}_{3,((1,2,4),3)} \cup \tilde{E}_{3,((1,3,4),2)} \cup \tilde{E}_{3,((2,3,4),1)}, \nonumber\\
\tilde{E}_{3,((s_1,s_2),(s_3,s_4)} &=& \text{set of all edges in $\tilde{E}\setminus (\tilde{E}_{1}\cup \tilde{E}_{2}\cup \tilde{E}_{3})$ where $E_{s_11}\cup E_{s_21}$ and $ E_{s_31} \cup E_{s_41}$} \nonumber \\
&& \text{are disconnected for all $s_1 \neq s_2 \neq s_3 \neq s_4$}, \nonumber \\
\tilde{E}_{4} &=& \tilde{E}_{4,((1,2),(3,4))} \cup \tilde{E}_{4,((1,3),(2,4))} \cup \tilde{E}_{4,((1,4),(2,3))}, \nonumber\\
\tilde{E}_{5} &=& \text{set of all edges in $\tilde{E}$ where $E_{u1}$'s are all connected for $u \in [4]$.} \nonumber 
\end{eqnarray}
Observe that $\tilde{E} = \cup_{s=1}^5 \tilde{E}_{s}\ \ \ \text{and}\ \ \ A_1 = A_{1\tilde{E}} = \sum_{s=1}^{5} A_{1\tilde{E}_s}$, 
    \( A_{1\tilde{E}_{2,((1,2),3,4)}} = A_{1\tilde{E}_{2,((1,3),2,4)}}  = A_{1\tilde{E}_{2,((1,4),2,3)}} = A_{1\tilde{E}_{2,((2,3),1,4)}} =  A_{1\tilde{E}_{2,((2,4),1,3)}} = A_{1\tilde{E}_{2,((3,4),1,2)}},\)  
    \( A_{1\tilde{E}_{3,((1,2,3),4)}} = A_{\tilde{E}_{3,((1,2,4),3)}} = A_{\tilde{E}_{3,((1,3,4),2)}} = A_{\tilde{E}_{3,((2,3,4),1)}}, \ 
     \  A_{1\tilde{E}_{4,((1,2),(3,4))}} = A_{1\tilde{E}_{4,((1,3),(3,4))}} = A_{1\tilde{E}_{4,((1,4),(2,3))}}. \) 
Define $A_{12} = A_{1\tilde{E}_{2,((1,2),3,4)}}A_4^{-2}$  and $A_{13} = A_{1\tilde{E}_{3,((1,2,3),4)}}A_4^{-1}$. Clearly, we have $A_{1\tilde{E}_{1}} = A_4^4$ and $A_{1\tilde{E}_{4,((1,2),(3,4))}} = A_{12}^2$.  
Hence,
   \( A_{1} = A_4^4 + 6A_{12}A_4^2 + 4A_{13}A_4 + 3A_{12}^2 + A_{1\tilde{E}_5}. \) 
Similarly, one can easily show that $A_2 = A_4^3 + 3A_{12}A_4 + A_{13},\ \ A_3 = A_4^2 + A_{12}$.
Therefore, 
\begin{eqnarray}
    \bE(\Pi - \bE(\Pi))^4  &=& A_1 - 4A_2A_4 + 6A_3A_4^2 - 3A_4^4 = A_4^4 + 6A_{12}A_4^2 + 4A_{13}A_4 + 3A_{12}^2 + A_{1\tilde{E}_5} \nonumber \\
    && - 4(A_4^3 + 3A_{12}A_4+A_{13})A_4 + 6(A_4^2+A_{12})A_4^2 - 3A_4^4 = 3A_{12}^2 + A_{1\tilde{E}_5}. \label{eqn: g2} 
\end{eqnarray}
Let $\tilde{\tilde{E}}$ be the subset of $\cup_{u \in [4]} E_{u1}$ where each edge appears exactly twice. By the same argument as in the proof of (c1), we have
 \(   A_{12}^2 = A_{1(\tilde{E}_{4,((1,2),(3,4))} \cap \tilde{\tilde{E}})} + O(p^2)\ \  \text{and}\ \ A_{1\tilde{E}_5} = A_{1(\tilde{E}_5 \cap \tilde{\tilde{E}})} + O(p). \) 
Define
\begin{eqnarray}
\tilde{A}_{1,(r_1,r_2,r_3r_4)} &=&  \dfrac{n^{2R}}{p^{2R}(n(n-1))^{4R}} \sum_{\substack{k_{ur} \in [p], \ i_{ur}, \ j_{ur} \in [n] \\ (r \in [R], \ u \in [4])}} \bE\Big(\prod_{\substack{r\in [R], \ u \in [4]}}  \big(Y_{k_{ur},i_{ur},j_{ur}}Y_{k_{u\sigma_R(r)},i_{ur},j_{ur}}-\frac{n(n-1)}{n^2}g_1\delta_{k_{ur}k_{u{\sigma_R}(r)}}\big)\Big) \nonumber \\
&& \hspace{2.5 cm}\bI((k_{1r_1}, i_{1r_1}) = (k_{2r_2},i_{2r_2}),(k_{3r_3}, i_{3r_3}) = (k_{4r_4},i_{4r_4})), \nonumber \\
\tilde{A}_{2,(r_1,r_2,r_3r_4)} &=&  \dfrac{n^{2R}}{p^{2R}(n(n-1))^{4R}} \sum_{\substack{k_{ur} \in [p], \ i_{ur}, \ j_{ur} \in [n] \\ (r \in [R], \ u \in [4])}} \bE\Big(\prod_{\substack{r\in [R], \ u \in [4]}}  \big(Y_{k_{ur},i_{ur},j_{ur}}Y_{k_{u\sigma_R(r)},i_{ur},j_{ur}}-\frac{n(n-1)}{n^2}g_1\delta_{k_{ur}k_{u{\sigma_R}(r)}}\big)\Big) \nonumber \\
&& \hspace{2 cm}\bI((k_{1\sigma_{R}(r_1)}, i_{1r_1}) = (k_{2r_2},i_{2r_2}),(k_{3r_3}, i_{3r_3}) = (k_{4r_4},i_{4r_4})), \nonumber \\
\tilde{A}_{3,(r_1,r_2,r_3r_4)} &=&  \dfrac{n^{2R}}{p^{2R}(n(n-1))^{4R}} \sum_{\substack{k_{ur} \in [p], \ i_{ur}, \ j_{ur} \in [n] \\ (r \in [R], \ u \in [4])}} \bE\Big(\prod_{\substack{r\in [R], \ u \in [4]}}  \big(Y_{k_{ur},i_{ur},j_{ur}}Y_{k_{u\sigma_R(r)},i_{ur},j_{ur}}-\frac{n(n-1)}{n^2}g_1\delta_{k_{ur}k_{u{\sigma_R}(r)}}\big)\Big) \nonumber \\
&& \hspace{1.5 cm}\bI((k_{1\sigma_{R}(r_1)}, i_{1r_1}) = (k_{2r_2},i_{2r_2}),(k_{3\sigma_{R}(r_3)}, i_{3r_3}) = (k_{4r_4},i_{4r_4})). \nonumber
\end{eqnarray}
Clearly,
\(    A_{1(\tilde{E}_{4,((1,2),(3,4))} \cap \tilde{\tilde{E}})} = \sum_{\substack{r_u \in [R], (u \in [4]) } } (\tilde{A}_{1,(r_1,r_2,r_3r_4)} + 2\tilde{A}_{2,(r_1,r_2,r_3r_4)}+\tilde{A}_{3,(r_1,r_2,r_3r_4)}). \) 
Denote by $${\mathbb{\tilde{G}}}(k_r,k_{{\sigma_R}(r)})= \sqrt{np^{-1}}(\bG(k_r,k_{\sigma_R(r)})-g_1\delta_{k_r,k_{{\sigma_R}(r)}}).$$  
Note that
\begin{eqnarray}
    \tilde{A}_{1,(r_1,r_2,r_3r_4)} &=& \frac{n^2}{p^2(n(n-1))^{4}}\sum_{\substack{k_{ur_u},k_{u(r_u+1)}, \\ k_{(u+1)(r_u+1)} \in [p]\\ u=1,3} } \sum_{\substack{i_{ur_u},j_{ur_u},\\ j_{(u+1)r_u} \in [n]\\ u=1,3}}  \prod_{u=1,3}\bE(\mathbb{\tilde{G}}^{R-1}(k_{ur_u},k_{u(r_u+1)}) \nonumber \\
     && \hspace{1 cm}Y_{k_{u(r_u+1)},i_{ur_u},j_{ur_u}}Y_{k_{(u+1)(r_u+1)},i_{ur_u},j_{(u+1)r_{u+1}}}\mathbb{\tilde{G}}^{R-1}(k_{(u+1)(r_u+1)},k_{ur_u})) \bE(Y_{k_{ur_u},i_{ur_u},j_{ur_u}}Y_{k_{ur_u},i_{ur_u},j_{(u+1)r_{u+1}}}) \nonumber \\
        &=& \frac{n^2}{p^2(n(n-1))^{4}}\sum_{\substack{k_{ur_u},k_{u(r_u+1)}, \\ k_{(u+1)(r_u+1)} \in [p]\\ u=1,3}}  \sum_{\substack{i_{ur_u},j_{ur_u} \in [n] \\ u=1,3}} \prod_{u=1,3}\bE(\mathbb{\tilde{G}}^{R-1}(k_{ur_u},k_{u(r_u+1)}) \nonumber \\
          && \hspace{1 cm}Y_{k_{u(r_u+1)},i_{ur_u},j_{ur_u}}Y_{k_{(u+1)(r_u+1)},i_{ur_u},j_{ur_u}}\mathbb{\tilde{G}}^{R-1}(k_{(u+1)(r_u+1)},k_{ur_u}))  \bE(Y_{k_{ur_u},i_{ur_u},j_{ur_u}}^2) \nonumber 
         \end{eqnarray}
\begin{eqnarray}
    & \lesssim & \frac{n^2}{p^2(n(n-1))^{4}}\sum_{\substack{k_{ur_u},k_{u(r_u+1)}, \\ k_{(u+1)(r_u+1)} \in [p]\\ u=1,3}}  \sum_{\substack{i_{ur_u},j_{ur_u} \in [n]\\ u=1,3}}\prod_{u=1,3}\bE(\mathbb{\tilde{G}}^{R-1}(k_{ur_u},k_{u(r_u+1)}) \nonumber \\
    && (Y_{k_{u(r_u+1)},i_{ur_u},j_{ur_u}}Y_{k_{(u+1)(r_u+1)},i_{ur_u},j_{ur_u}}-g_1\delta_{k_{u(r_u+1)}k_{(u+1)(r_u+1)}}+g_1\delta_{k_{u(r_u+1)}k_{(u+1))(r_u+1)}})\tilde{\mathbb{G}}^{R-1}(k_{(u+1)(r_u+1)},k_{ur_u})) \nonumber \\
    & = & \frac{{n}}{{p}(n(n-1))^{2}}\sum_{\substack{k_{ur_u},k_{u(r_u+1)}, \\ k_{(u+1)(r_u+1)} \in [p]\\ u=1,3}} \prod_{u=1,3} \big[\bE(\tilde{\mathbb{G}}^{R-1}(k_{ur_u},k_{u(r_u+1)})\mathbb{\tilde{G}}(k_{u(r_u+1)}, k_{(u+1)(r_u+1)})\tilde{\mathbb{G}}^{R-1}(k_{(u+1)(r_u+1)},k_{ur_u}))\big]\nonumber\\
    &+& \frac{{n^2}}{{p}^2(n(n-1))^{2}}\sum_{\substack{k_{ur_u},k_{u(r_u+1)}, \\ k_{(u+1)(r_u+1)} \in [p]\\ u=1,3}} \prod_{u=1,3}\big[\bE(\mathbb{\tilde{G}}^{R-1}(k_{ur_u},k_{u(r_u+1)})g_1\delta_{k_{u(r_u+1)}k_{(u+1))(r_u+1)}}\mathbb{\tilde{G}}^{R-1}(k_{(u+1)(r_u+1)},k_{ur_u}))\big]\nonumber \\
    &\lesssim& \Bigg(\frac{\sqrt{n}}{\sqrt{p}(n(n-1))} \bE {\rm{\tr}}(\mathbb{\tilde{G}}^{2R-1})\Bigg)^2+\Bigg(\frac{n}{p(n(n-1))}\bE {\rm{\tr}}(\mathbb{\tilde{G}}^{2R-2}))\Bigg)^2 = O(1). \nonumber 
\end{eqnarray}
Similarly, one can show that $\tilde{A}_{u,(r_1,r_2,r_3r_4)} = O(1)$ for $u=2,3$. 
Similar arguments prove that  $A_{1(\tilde{E}_5 \cap \tilde{\tilde{E}})} = O(p^2)$. 
Hence we have $\mathbb{E}(\Pi-\bE(\Pi))^4 = O(p^2)$. This establishes (c2) and completes the proof Theorem \ref{thm: 1}.

\vskip 5pt
\noindent \textbf{Acknowledgment}. The authors sincerely thank the Editor, the Associate Editor, and the Referees for their careful reading of the manuscript and for their constructive comments and suggestions. Their insightful feedback has significantly improved the clarity, presentation, and overall quality of the paper.

  \bibliographystyle{elsarticle-num-names} 
  \bibliography{kendallref}

\end{document}